  \theoremstyle{plain}
  \newtheorem{thm}{Theorem}[section]
  \theoremstyle{definition}
  \newtheorem{defn}[thm]{Definition}
  \theoremstyle{plain}
  \newtheorem{lem}[thm]{Lemma}
  \theoremstyle{plain}
  \newtheorem{cor}[thm]{Corollary}
  \theoremstyle{plain}
  \newtheorem{prop}[thm]{Proposition}
  \theoremstyle{remark}
  \newtheorem*{claim*}{Claim}
  \newtheorem*{rem}{Remark}
\newcommand{\N}{\mathbb{N}}
\newcommand{\R}{\mathbb{R}}
\newcommand{\coo}{c_{00}}
\newcommand{\kcdot}{\!\cdot\!}
\newcommand{\keq}{\!=\!}
\newcommand{\kin}{\!\in\!}
\newcommand{\kge}{\!>\!}
\newcommand{\kgeq}{\!\geq\!}
\newcommand{\kle}{\!<\!}
\newcommand{\kleq}{\!\leq\!}
\newcommand{\kminus}{\!-\!}
\newcommand{\ksubset}{\!\subset\!}
\newcommand{\ktimes}{\!\times\!}
\newcommand{\vp}{\varepsilon}
\begin{document}

\title{Banach Spaces of Bounded Szlenk index II}
\author{D. Freeman}
\address{Department of Mathematics\\ Texas A\&M University\\
College Station, TX 77843, USA} \email{freeman@math.tamu.edu}
\author{E. Odell}
\address{Department of Mathematics \\
The University of Texas\\1 University Station C1200\\
Austin, TX 78712  USA} \email{odell@math.utexas.edu}
\author{Th. Schlumprecht}
\address{Department of Mathematics, Texas A\&M University\\
College Station, TX 77843, USA}
\email{thomas.schlumprecht@math.tamu.edu}

\author{A. Zs\'ak}
\address{School of Mathematics, University of Leeds, \\ Leeds, LS2 9JT,
  United Kingdom}
\email{zsak@maths.leeds.ac.uk}

\thanks{Research of the second and third author was supported by the
  National Science Foundation.
 This paper forms a portion of the doctoral dissertation of the first author which is being prepared
 at Texas A\&M University under the direction of the third author.}
\subjclass[2000]{46B20, 54H05}
\keywords{Szlenk index, universal space, embedding into FDDs,
  Effros-Borel structure, analytic classes}

\maketitle

\begin{abstract}
For every $\alpha<\omega_1$ we establish the existence of a separable Banach space whose Szlenk index is
 $\omega^{\alpha\omega+1}$ and which is universal for all separable Banach spaces
whose Szlenk-index does not exceed $\omega^{\alpha\omega}$. In order
to prove that result we provide an intrinsic characterization of
which Banach spaces embed into a space admitting an FDD with upper
 estimates.
\end{abstract}

\section{Introduction}\label{S1}
The added structure and rich theory of coordinate systems can be of
significant help when studying Banach spaces. Because of this, it is
often the case that Banach spaces are studied in the context of
being a subspace or quotient of some space with a coordinate system.
Two early results in this area are that every separable Banach space
is the quotient of $\ell_1$ and also every separable Banach space
may be embedded as a subspace of $C[0,1]$.  Both $\ell_1$ and
$C[0,1]$ have bases, and so we have in particular that every
separable Banach space is a quotient of a Banach space with a basis
and may also be embedded as a subspace of a Banach space with a
basis. However, it is often that one has a Banach space with a
particular property, and one wishes that the coordinate system has
some associated property. The first important step in this direction
was made by Zippin \cite{Z} who proved the following two major
results: every separable reflexive Banach space may be embedded as a
subspace of a space with shrinking and boundedly complete basis, and
every Banach space with separable dual may be embedded in a Banach
space with a shrinking basis. Further results in this area give
intrinsic characterizations on when a space may be embedded as a
subspace of a reflexive space with unconditional basis \cite{JZh},
or a reflexive space with an asymptotic $\ell_p$ FDD \cite{OSZ}. These
are only a portion of the recent results in this area. These new
characterizations are all based on the relatively recent tool of
weakly null trees. One important result in particular for us is a
characterization of subspaces of reflexive spaces with an FDD
satisfying subsequential $V$ upper block estimates and subsequential $U$
lower block estimates where $V$ is an unconditional, block stable, and
right dominant basic sequence and $U$ is an unconditional, block
stable, and left dominant basic sequence \cite{OSZ2}.  This
characterization when applied to Tsirelson spaces was shown to
have strong applications to the Szlenk index of reflexive spaces
\cite{OSZ3}. Our main result adds to this theory with the following
theorem which extends the results in \cite{OSZ2} and \cite{OSZ3} to
spaces with separable dual.
 The  notions and concepts used will be introduced in the next section.

\begin{thm}\label{T1}
Let $X^*$ be separable and $V=(v_i)$ be a normalized, 1-unconditional,
block stable, right dominant, and shrinking basic sequence. Then the
following are equivalent.
\begin{enumerate}
\item[1)]$X$ has subsequential $V$ upper tree estimates.
\item[2)]$X$ is a quotient of a space $Z$ with $Z^*$ separable and
$Z$ has subsequential $V$ upper tree estimates.
\item[3)]$X$ is a quotient of a space $Z$ with a shrinking  FDD satisfying
subsequential $V$ upper block estimates.
\item[4)]There exists a $w^*-w^*$ continuous embedding of $X^*$ into
$Z^*$, a space with boundedly complete FDD $(F_i^*)$ (so $Z=\oplus F_i$
defines $Z^*$) satisfying subsequential $V^*$ lower block estimates.
\item[5)]$X$ is isomorphic to a subspace of a space $Z$ with a shrinking FDD
satisfying subsequential $V$ upper  block estimates.
\end{enumerate}
\end{thm}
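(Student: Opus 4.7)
My plan is to prove the equivalences via the easy links $(3) \Rightarrow (2) \Rightarrow (1)$ and $(5) \Rightarrow (1)$, together with the duality $(3) \Leftrightarrow (4)$, and close the loop with the two main implications $(1) \Rightarrow (3)$ and $(1) \Rightarrow (5)$. The easier directions rely on standard facts about weakly null trees, skipped-block decompositions in shrinking FDDs, and FDD duality.

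For $(5) \Rightarrow (1)$, given a normalized weakly null even tree in $X \subseteq Z$, a pruning argument yields a full subtree whose branches are, up to a small perturbation, skipped-block sequences with respect to the shrinking FDD of $Z$; the $V$ upper block estimate then transfers to each branch. The implication $(3) \Rightarrow (2)$ is immediate, since $V$ upper block estimates on a shrinking FDD of $Z$ already give $V$ upper tree estimates on $Z$; and $(2) \Rightarrow (1)$ follows because a normalized weakly null tree in $X$ lifts, through a bounded linear selection and using separability of $Z^*$, to an almost weakly null tree in $Z$. For $(3) \Leftrightarrow (4)$, a $w^*$-$w^*$ continuous isomorphic embedding of $X^*$ into $Z^*$ is precisely the adjoint of a quotient map $Z \to X$, while a shrinking FDD $(F_i)$ of $Z$ satisfies $V$ upper block estimates if and only if the boundedly complete dual FDD $(F_i^*)$ of $Z^*$ satisfies $V^*$ lower block estimates.

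The crux is $(1) \Rightarrow (5)$, with a parallel argument yielding $(1) \Rightarrow (3)$. My strategy extends the construction of \cite{OSZ2} from the reflexive setting to the separable dual setting. First, use a Zippin-type theorem to embed $X$ into a space $W$ with a shrinking FDD $(G_n)$. Next, block $(G_n)$ into a coarser FDD $(H_n)$ so that every $x$ in the unit sphere of $X$ admits a skipped-block decomposition with respect to $(H_n)$ up to a small perturbation, and the blocking is fine enough that any skipped-block sequence arising from such decompositions inherits a $V$ upper estimate from the upper tree estimate on $X$. Finally, take $Z$ to be the closed linear span of $X$ together with appropriately chosen auxiliary vectors so that $(H_n)$, restricted and renormed suitably, forms a shrinking FDD of $Z$ satisfying the required $V$ upper block estimates.

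The main obstacle is passing from the branchwise upper tree estimate on $X$ to a uniform upper block estimate for every skipped-block sequence representing a sequence in $X$. This will require an infinite-dimensional Ramsey or repeated-averaging argument that stabilises the estimate over all branches of a tree obtained by inflating skipped blocks to nodes. Separability of $X^*$, which replaces the reflexivity hypothesis of \cite{OSZ2}, enters crucially in two places: it supplies weakly Cauchy subsequences via Rosenthal's $\ell_1$ theorem in the lifting and perturbation steps, and, combined with the shrinking hypothesis on $V$, it forces the FDD produced on $Z$ to be shrinking rather than merely an FDD.
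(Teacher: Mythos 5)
Your overall cycle of implications is essentially the one used in the paper, and you correctly identify $(1)\Rightarrow(5)$ (and the closely related $(1)\Rightarrow(3)$, or $(1)\Rightarrow(4)$ as the paper does it) as the real work, with the rest following from duality, lifting, and the shrinking-FDD/skipped-block dictionary. However, the central step as you sketch it has a genuine gap.

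You propose, for $(1)\Rightarrow(5)$, to embed $X$ into a space $W$ with shrinking FDD $(G_n)$, block it so that every $x\in S_X$ admits an approximate skipped-block decomposition, and then obtain the $V$ upper block estimates by renorming. The difficulty is precisely the blocking step. The decomposition lemma you invoke (Johnson's blocking argument, i.e.\ \cite[Lemma 4.3, Corollary 4.4]{OS1}, restated as Proposition~\ref{P3} here) requires extracting weak or weak-$*$ limits of tails of $x$, hence needs either reflexivity or a boundedly complete FDD together with $X$ being $w^*$-closed. In the separable-dual setting you are in, $(G_n)$ is shrinking but not boundedly complete, $B_X$ is not weakly compact, and Johnson's blocking argument simply does not apply to $X\subset W$ directly. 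This is exactly the obstruction the paper flags explicitly (``some of the blocking techniques which we use depend on the FDD being boundedly complete''), and its way around it is a systematic dualization: one passes to $X^*$, realized via $Q^*$ as a $w^*$-closed subspace of $W^*$ with the boundedly complete FDD $(E_j^*)$, proves the dual statement about $V^*$ lower block estimates there (using Lemma~\ref{L3} to transfer upper tree estimates on $X$ to lower $w^*$ tree estimates on $X^*$, the game characterization of Proposition~\ref{P1}, and the $Z^{V^*}$ renorming), and only then dualizes back. Your outline never passes to the dual, so the blocking step you need is unavailable. Separability of $X^*$ alone does not rescue the direct approach; what it buys, via Zippin and DFJP, is the existence of the dual picture in which the boundedly complete machinery runs.

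Two further, smaller issues. Your $(2)\Rightarrow(1)$ argument as stated invokes a ``bounded linear selection'' of the quotient map; quotient maps do not in general admit bounded linear right inverses (that would make $X$ complemented in $Z$), and in $(2)$ the space $Z$ is not assumed to have an FDD, so the lifting lemma (Lemma~\ref{L1}) is not directly applicable either. The paper avoids this by first applying $(1)\Rightarrow(3)$ to $Z$ to replace $Z$ by a space with shrinking FDD and then applying $(3)\Rightarrow(1)$ to $X$. Finally, where you anticipate ``an infinite-dimensional Ramsey or repeated-averaging argument'' to pass from branchwise tree estimates to uniform block estimates, the paper instead uses the determinacy-based infinite game of Proposition~\ref{P1} together with the explicit $Z^V$ renorming; the game plus renorming is what delivers the uniform estimate, rather than a Ramsey-type stabilization.
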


Using our characterization, we are able to achieve the following
universality result:

\begin{thm}\label{T2}
Let $V=(v_i)$ be a normalized, 1-unconditional, shrinking, block
stable, and right dominant basic sequence. There is a Banach space $Z$
with a shrinking FDD $(F_i)$ satisfying subsequential $V$ upper block
estimates such that if a Banach space $X$ with separable dual
satisfies subsequential $V$ upper tree estimates, then $X$ embeds into
$Z$.
\end{thm}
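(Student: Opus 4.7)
The plan is to combine Theorem~\ref{T1} with a direct amalgamation construction. By the equivalence $1)\Leftrightarrow 5)$ of Theorem~\ref{T1}, every $X$ with $X^*$ separable and subsequential $V$ upper tree estimates is already isomorphic to a subspace of \emph{some} space $Z_X$ having a shrinking FDD $(E^X_i)$ that satisfies subsequential $V$ upper block estimates. Hence it suffices to construct a single space $Z$ of the desired form that contains every such $Z_X$ as a subspace. The first step is to argue that, up to a bounded worsening of constants, the FDD projection constants and upper block estimate constants of the $Z_X$ may be taken uniformly bounded by some $C=C(V)$; this uses that $V$ is block stable and right dominant, so that inflating or coarsening the block structure does not destroy the upper estimate.

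Next I would enumerate a countable family $\{(B_n,m_n)\}_{n\in\N}$ that is dense, in the Banach--Mazur distance and with matching labels $m_n\in\N$, in the class of all finite blockings occurring inside shrinking FDDs satisfying $C$-subsequential $V$ upper block estimates. Such an enumeration exists because finite-dimensional normed spaces with a distinguished basis form a separable class. The universal space is then defined as
\[
Z \;=\; \Bigl(\bigoplus_{n\in\N} B_n\Bigr)_V,
\]
where the summand $B_n$ is placed at position $m_n$ of the basis $V$, and the listing is arranged so that each label $m\in\N$ is realised infinitely often. Since $V$ is $1$-unconditional, shrinking, block stable and right dominant, the natural FDD $(B_n)$ of $Z$ is itself shrinking and satisfies subsequential $V$ upper block estimates with a constant depending only on $C$ and $V$.

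Finally, to establish universality, take $X$ with $X^*$ separable and subsequential $V$ upper tree estimates, and embed $X\hookrightarrow Z_X$ via Theorem~\ref{T1}, producing the FDD $(E^X_i)$ with labels $(m^X_i)$. For each $i$ I would locate some $B_{n(i)}$ at label $m^X_i$ that is $(1+\varepsilon_i)$-close to $E^X_i$ in Banach--Mazur distance, and then invoke the standard small-perturbation lemma for FDDs to identify $Z_X$ isomorphically with a block subspace of $Z$, which yields the desired embedding $X\hookrightarrow Z$. The main obstacle will be to carry out this perturbation while simultaneously preserving three features: the FDD decomposition itself, the shrinking property, and the upper block estimate constant. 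I would handle this by choosing the errors $(\varepsilon_i)$ summable and small enough that each type of perturbation loss accumulates only to a bounded multiplicative factor, absorbing these factors using the $1$-unconditionality of $V$ together with a standard diagonal argument.
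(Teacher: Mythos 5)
Your high-level plan coincides with the paper's: reduce, via Theorem~\ref{T1} (1)$\Leftrightarrow$(5), to embedding spaces with a shrinking FDD satisfying subsequential $V$ upper block estimates, and then produce a single space of that type that contains all of them. But the concrete construction you propose for the universal space diverges sharply from the paper's, and it contains gaps that I don't think can be patched along the lines you sketch.

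The first gap is the claim that the upper block estimate constants of the spaces $Z_X$ ``may be taken uniformly bounded by some $C=C(V)$.'' This is false, and block stability and right dominance of $V$ do not help: if $X$ embeds into a space with shrinking FDD satisfying subsequential $C'$-$V$ upper block estimates, then $X$ automatically satisfies subsequential $C''$-$V$ upper tree estimates for some $C''$ comparable to $C'$, so the block estimate constant of $Z_X$ is bounded \emph{below} by (essentially) the tree estimate constant of $X$, which is unbounded over the class. Consequently a ``dense family of finite blockings with a fixed constant $C$'' cannot be dense enough to approximate all the $Z_X$, and the amalgamation misses most of the class.

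The second gap is the construction $Z=\bigl(\bigoplus_n B_n\bigr)_V$ itself. You need the FDD $(B_n)$ to satisfy \emph{subsequential} $V$ upper block estimates, which means that a block $z$ with $\min\mathrm{supp}(z)$ in $B_n$ must be controlled by $v_{m_n}$ and not by $v_n$; a plain $V$-direct sum gives the latter, not the former, and once you insist that every label $m$ occurs infinitely often the indexing $n\mapsto m_n$ is not monotone, so there is no straightforward relabeling. It is also not established that the sum is shrinking, nor that the final perturbation preserves the labels $m_i^X$ in a way compatible with subsequentiality. The paper avoids all of these difficulties by a different mechanism: it takes Schechtman's universal space $W$ with a bimonotone FDD $(E_i)$ into which every space with a bimonotone FDD embeds \emph{complementably onto a subsequence} $(E_{k_i})$, and then renorms by passing to $Y=\bigl((W^{(*)})^{V^*}\bigr)^{(*)}$. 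Proposition~\ref{P2} gives $(E_i^*)$ subsequential $V^*$ lower block estimates in $(W^{(*)})^{V^*}$ and preserves bounded completeness, so after dualizing $(E_i)$ is a shrinking FDD of $Y$ with subsequential $V$ upper block estimates; the norm comparison between $\|\cdot\|_W$ and $\|\cdot\|_Y$ on the image of $X$ is then carried out explicitly using block stability and right/left dominance, with a constant that is allowed to depend on $X$. The Schechtman step and the $Z^V$ renorming are precisely the ingredients your outline lacks, and they are what make the subsequential structure and the non-uniformity of constants manageable.
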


We will apply Theorems~\ref{T1} and~\ref{T2} for the case that $V$ is
the canonical basis of $T_{\alpha,c}$, the Tsirelson space of
order~$\alpha$ and parameter~$c$, which will allow us to prove some
new results for the Szlenk index. As shown in \cite{OSZ3}, the Szlenk
index is closely related to a space having subsequential
$T_{\alpha,c}$ upper tree estimates for some $0<c<1$. In
particular, for each $\alpha<\omega_1$ if a Banach space $X$ with
separable dual has Szlenk index at most $\omega^{\alpha\omega}$, then
$X$ satisfies subsequential $T_{\alpha,c}$ upper tree
estimates for some $c\in(0,1)$. In~\cite{OSZ3} the converse is also
shown in the case that $X$ is reflexive. Our characterization allows
us to extend this to the class of spaces with separable dual. We give
the following theorem.
\begin{thm}\label{T3} Let $\alpha<\omega_1$.  For a space $X$ with separable
dual, the following are equivalent:
\begin{enumerate}
\item[(i)]$X$ has Szlenk index at most $\omega^{\alpha\omega}$.
\item[(ii)]$X$ satisfies subsequential $T_{\alpha,c}$ upper tree estimates for
some $c\in(0,1)$.
\item[(iii)]$X$ embeds into a space $Z$ with an FDD $(E_i)$ which
satisfies subsequential $T_{\alpha,c}$ upper block estimates in $Z$ for
some $c\in(0,1)$.
\end{enumerate}
\end{thm}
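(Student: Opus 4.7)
The plan is to obtain Theorem~\ref{T3} essentially as a direct corollary of Theorem~\ref{T1} applied to the canonical basis $V=(t_i)$ of the Tsirelson space $T_{\alpha,c}$, together with two facts about this basis and its connection to the Szlenk index that are already available in the literature.

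First I would dispose of the hypothesis check. The canonical basis of $T_{\alpha,c}$ is, by construction, normalized, $1$-unconditional, and shrinking; block stability and right dominance are also standard properties of the Tsirelson bases $T_{\alpha,c}$ that were recorded in \cite{OSZ3}. So Theorem~\ref{T1} is applicable with $V=(t_i)$ whenever $X^*$ is separable.

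Next I would handle the three implications in turn. For (i)$\Rightarrow$(ii), I would simply quote the result from \cite{OSZ3} mentioned in the paragraph preceding Theorem~\ref{T3}: whenever $X$ has separable dual and $\Sz(X)\leq\omega^{\alpha\omega}$, there exists $c\in(0,1)$ for which $X$ has subsequential $T_{\alpha,c}$ upper tree estimates. For (ii)$\Rightarrow$(iii), I would invoke Theorem~\ref{T1} with $V$ the canonical basis of $T_{\alpha,c}$: condition~(1) there is precisely our (ii), and condition~(5) is precisely our (iii), so the implication is immediate. For (iii)$\Rightarrow$(i), I would argue in two steps: (a) the Szlenk index is monotone under taking subspaces of spaces with separable dual, so it suffices to bound $\Sz(Z)$; (b) a space $Z$ with a shrinking FDD satisfying subsequential $T_{\alpha,c}$ upper block estimates has $\Sz(Z)\leq\omega^{\alpha\omega}$, a bound established in \cite{OSZ3}. (Note that the FDD supplied by Theorem~\ref{T1}(5) is automatically shrinking, so separability of $Z^*$ and the Szlenk bound are both at our disposal.)

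The only point requiring a small amount of care is the quotation in (iii)$\Rightarrow$(i): I must make sure that the FDD-level Szlenk bound from \cite{OSZ3} is stated (or easily derivable) for the exact class of FDDs produced by Theorem~\ref{T1}, namely shrinking FDDs with subsequential $T_{\alpha,c}$ upper block estimates, rather than only for reflexive ambient spaces. If the reference only covers the reflexive case, I would need to rerun the Szlenk computation, using that upper $T_{\alpha,c}$ block estimates on a shrinking FDD force the $w^*$-Szlenk derivations of $B_{Z^*}$ to descend through a tree whose height is controlled by the Bourgain-$\ell_1^+$-style index of $T_{\alpha,c}^*$, which is of order $\omega^{\alpha\omega}$. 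This is the only place I expect any genuine work; everything else is combining Theorem~\ref{T1} with existing facts about the bases $T_{\alpha,c}$.
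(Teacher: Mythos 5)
Your proposal matches the paper's own proof (given as Corollary~\ref{C2}) essentially step for step: (i)$\Rightarrow$(ii) is quoted from Corollary~19 and Theorem~21 of \cite{OSZ3}, (ii)$\Rightarrow$(iii) is the implication (1)$\Rightarrow$(5) of Theorem~\ref{T1}, and (iii)$\Rightarrow$(i) is cited from \cite[Proposition~17]{OSZ3}, which already gives the Szlenk bound for a space with an FDD satisfying subsequential $T_{\alpha,c}$ upper block estimates (and with monotonicity of the Szlenk index under subspaces, exactly as you outline). Your caution about whether the \cite{OSZ3} results require reflexivity is well placed; the paper addresses it simply by noting that the relevant implications there do not use reflexivity, so no rerunning of the Szlenk computation is needed.
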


Note that the implication (iii)$\Rightarrow$(i) shows that the space
$Z$ in~(iii) also has Szlenk index at most
$\omega^{\alpha\omega}$. In particular, since the unit vector
basis of $T_{\alpha,c}$ satisfies subsequential $T_{\alpha,c}$ upper
block estimates, the same is true for $T_{\alpha,c}$. The above
structure theorem then says that the Tsirelson spaces $T_{\alpha,c}$
form a sort of upper envelope for the class of spaces with separable
dual and with Szlenk index at most $\omega^{\alpha\omega}$.

We are able to combine the previous two theorems using ideas in
\cite{OSZ3} to prove the following universality result.
\begin{thm}\label{T4}
For each $\alpha<\omega_1$ there exists a Banach space $Z$ with a
shrinking FDD and Szlenk index at most $\omega^{\alpha\omega+1}$
such that $Z$ is universal for the collection of spaces with
separable dual and Szlenk index at most $\omega^{\alpha\omega}$.
\end{thm}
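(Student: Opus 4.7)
The plan is to combine Theorems~\ref{T2} and~\ref{T3} by taking, over a sequence of parameters $c_n \nearrow 1$ in $(0,1)$, a $c_0$-sum of the universal spaces produced by Theorem~\ref{T2} with $V_n$ the basis of $T_{\alpha,c_n}$. For each $n$, Theorem~\ref{T2} furnishes a space $Z_n$ with a shrinking FDD satisfying subsequential $V_n$ upper block estimates that is universal for the class of spaces with separable dual having subsequential $V_n$ upper tree estimates. The implication (iii)$\Rightarrow$(i) of Theorem~\ref{T3} immediately gives $\Sz(Z_n) \leq \omega^{\alpha\omega}$ for every $n$.

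Set $Z = \bigl(\bigoplus_{n=1}^\infty Z_n\bigr)_{c_0}$. Interleaving the individual FDDs (ordered so that each piece is a single $F_i^{(n)}$ and pieces from larger $n$ appear later) produces a shrinking FDD for $Z$, and the standard fact that a countable $c_0$-sum raises the Szlenk index by at most one factor of~$\omega$ yields $\Sz(Z) \leq \omega^{\alpha\omega+1}$. For universality, given $X$ with separable dual and $\Sz(X) \leq \omega^{\alpha\omega}$, Theorem~\ref{T3}~(i)$\Rightarrow$(ii) supplies some $c \in (0,1)$ for which $X$ satisfies subsequential $T_{\alpha,c}$ upper tree estimates; choosing $n$ with $c_n \geq c$, the basis of $T_{\alpha,c_n}$ dominates the basis of $T_{\alpha,c}$, so $X$ also satisfies subsequential $T_{\alpha,c_n}$ upper tree estimates. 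Theorem~\ref{T2} then embeds $X$ into $Z_n$, and hence into $Z$.

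The main obstacle is the Szlenk index bound $\Sz(Z) \leq \omega^{\alpha\omega+1}$. One has to show that a single $w^*$-slicing of $B_{Z^*}$ effectively trims the contribution from all but finitely many summands of the $\ell_1$-sum dual, so that after iterating at most $\omega^{\alpha\omega}$ times the analysis localizes into one of the individual $Z_n^*$, whose own bound $\Sz(Z_n) \leq \omega^{\alpha\omega}$ finishes the count. Arranging the interleaved FDD so that it is simultaneously shrinking, has the right upper-block structure, and respects the summand decomposition for the slicing argument should be handled by the techniques already deployed in \cite{OSZ3}, where a parametric family of universal spaces is assembled in an analogous fashion.
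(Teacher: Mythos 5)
Your overall architecture matches the paper's: choose $c_n\nearrow 1$, use Theorem~\ref{T2} to produce for each $n$ a space $Z_n$ with a shrinking FDD satisfying subsequential $T_{\alpha,c_n}$ upper block estimates, observe via Theorem~\ref{T3}(iii)$\Rightarrow$(i) that $\Sz(Z_n)\le\omega^{\alpha\omega}$, take a direct sum, and get universality from Theorem~\ref{T3}(i)$\Rightarrow$(ii) together with the monotonicity of the $T_{\alpha,c}$ norm in~$c$. That part is fine and is exactly what the paper does (with $c_n=\tfrac{n}{n+1}$).

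The genuine gap is your choice of the $c_0$-sum and the Szlenk estimate you want for it. The paper takes $Z=(\bigoplus_n X_n)_{\ell_2}$ precisely because the estimate ``$\Sz(X_n)\le\omega^{\alpha\omega}$ for all $n$ implies $\Sz\big((\bigoplus_n X_n)_{\ell_2}\big)\le\omega^{\alpha\omega+1}$'' is \emph{proved} in \cite{OSZ3} and can simply be cited. The analogous statement for $c_0$-sums is not a ``standard fact'' in the sense of being available off the shelf here, and you yourself flag it as ``the main obstacle'' without closing it. Your sketch of how to close it is also misleading: for $Z=(\bigoplus_n Z_n)_{c_0}$ the dual is $(\bigoplus_n Z_n^*)_{\ell_1}$, and a $w^*$-slicing of its ball does \emph{not} localize to finitely many summands in the way you describe -- $w^*$-convergent nets in an $\ell_1$-sum can push mass to infinity without the slice diameter detecting it, which is exactly why the $\ell_q$ ($1<q<\infty$) situation, with its reflexive dual and good asymptotic smoothness modulus, is the one that is handled in \cite{OSZ3}. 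The fix is immediate: replace the $c_0$-sum by the $\ell_2$-sum and invoke the Szlenk bound from \cite{OSZ3}. With that change your argument coincides with the paper's proof.
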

In particular, the universal space $Z$ will be of the form
$(\sum_{n\in\N}X_n)_{\ell_2}$, where $X_n$ has an FDD satisfying
subsequential $T_{\alpha,\frac{n}{n+1}}$ upper block estimates and $X_n$ is
universal for all Banach spaces with separable dual which satisfy
subsequential  $T_{\alpha,\frac{n}{n+1}}$ upper tree estimates.

Theorem \ref{T4} represents a quantitative version of a result first
shown by Dodos and Ferenczi \cite{DF}, which states that for every
$\alpha<\omega_1$ there is a Banach space with separable dual which is
universal for all separable Banach spaces whose Szlenk index does not
exceed $\alpha$. As well as finding a bound on the Szlenk index of
this universal space, we also express, as mentioned above, the
topological property of having a certain Szlenk index in terms of norm
estimates in which the Tsirelson spaces play an essential
r\^ole. While the proofs in \cite{DF} use methods of descriptive set
theory  developed by Bossard~\cite{B}, our proofs will rely on
concepts like {\em infinite asymptotic games}, trees and branches as
introduced in \cite{OS1} and \cite{OS3}.

\section{Definitions and lemmas}\label{S2}

Our main result characterizes subspaces and quotients of spaces
having a shrinking FDD with subsequential $V$ upper block estimate, where $V$
is an unconditional, right dominant, block stable, and shrinking basic
sequence. The case when $V=T_{\alpha,c}$ is a Tsirelson space is
intimately related to the Szlenk index,
and has become an important property in the fertile area between
descriptive set theory and the classification of Banach spaces
\cite{OSZ3}. For $\alpha<\omega_1$ and $c\in(0,1)$, the definition of
the Tsirelson space $T_{\alpha,c}$ of order $\alpha$ and parameter
$c$, and the relevant properties of $T_{\alpha,c}$ for us can also be
found in \cite{OSZ3}.

 For basic notions like (shrinking and boundedly
complete) FDDs
 and their projection constants and blockings we refer to
  \cite{OSZ2}.
 If $Z$ is a Banach spaces with an FDD $E=(E_i)$,
 we denote by $c_{00}(\oplus E_i)$ the dense linear subspace of $Z$ spanned by $(E_i)$ and its closure by $[E_i]=[E_i]_Z$.
  We denote the closure of $c_{00}(\oplus E^*_i)$ inside $Z^{*}$ by $Z^{(*)}$. If
 $(E_i)$ is shrinking it follows that $Z^{(*)}=Z^*$ and if $(E_i)$ is
 boundedly complete, then $Z^{(*)}$ is the predual of $Z$.
  If $A\subset\N$ is finite, or cofinite, we denote
  the natural projection onto the closed span of $(E_i:i\in A)$ by $P_A^E$, i.e.
  $$P_A^E: Z\to Z,\quad  P_A^E\Big(\sum_{i=1}^\infty x_i\Big)=
  \sum_{i\in A} x_i, \text{ whenever }x_i\in E_i \text{ for $i\in \N$
    so that }\sum_{i=1}^\infty x_i\in Z.$$
Let us also recall the following notion from \cite{OSZ2}.

\begin{defn}
Let $Z$ be a Banach space with an FDD $(E_n)$, let $V=(v_i)$ be a
normalized 1-unconditional basis, and let $1\leq C<\infty$. We say
that $(E_n)$ {\em satisfies subsequential }
$C$-$V$-{\em upper block estimates} if every normalized block sequence
$(z_i)$ of $(E_n)$ in $Z$ is $C$-dominated by $(v_{m_i})$, where
$m_i=\min\textrm{supp}_E (z_i)$ for all $i\in\N$.
 We say that $(E_n)$ {\em satisfies subsequential }
$C$-$V$-{\em lower block estimates} if every normalized block sequence
$(z_i)$ of $(E_n)$ in $Z$ $C$-dominates $(v_{m_i})$, where
$m_i=\min\textrm{supp}_E (z_i)$ for all $i\in\N$.
 We say that $(E_n)$ {\em satisfies subsequential}
  $V$-{\em upper (or lower) block
 estimates} if it satisfies subsequential $C$-$V
 $-upper (or lower) block estimates for some $1\leq C<\infty$.
\end{defn}
Subsequential $V$-upper block estimates and subsequential $V$-lower block
estimates are dual properties, as shown in the following proposition
from \cite{OSZ2}.
\begin{prop}\label{P0}{\rm \cite[Proposition 2.14]{OSZ2}}
Assume that $Z$ has an FDD $(E_i)$, and let $V=(v_i)$ be a
1-unconditional normalized basic sequence with biorthogonal
functionals $V^*=(v_i^*)$.  The following statements are equivalent:
\begin{enumerate}
\item[(a)]$(E_i)$ satisfies subsequential $V$-upper block estimates in
$Z$.\\
\item[(b)]$(E^*_i)$ satisfies subsequential $V^*$-lower block estimates in
$Z^{(*)}$.\\
\end{enumerate}
Moreover, if $(E_i)$ is bimonotone in $Z$, then the equivalence
holds true if one replaces, for some $C\geq1$, $V$-upper estimates
by $C$-$V$-upper estimates in $(a)$ and $V^*$-lower block estimates by
$C$-$V^*$-lower block estimates in $(b)$.
\end{prop}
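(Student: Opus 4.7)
My plan is to prove the equivalence by a symmetric duality argument; I describe $(a)\Rightarrow(b)$ in detail and indicate how $(b)\Rightarrow(a)$ runs in parallel. Let $K$ denote the projection constant of $(E_i)$ in $Z$, so that $K=1$ when $(E_i)$ is bimonotone.

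For $(a)\Rightarrow(b)$, fix a normalized block sequence $(z^*_i)$ of $(E^*_i)$ in $Z^{(*)}$, set $m_i=\min\textrm{supp}_{E^*}(z^*_i)$, and take scalars $(a_i)$. Using duality between $V$ and $V^*$ together with $1$-unconditionality of $V$, I choose scalars $(b_i)$ with $a_ib_i\geq 0$, $\|\sum b_iv_{m_i}\|_V\leq 1$, and $\sum a_ib_i\geq\|\sum a_iv^*_{m_i}\|_{V^*}-\vp$. The key construction step is, for each $i$, to produce a vector $z_i\in Z$ with $\textrm{supp}_E(z_i)\subseteq\textrm{supp}_{E^*}(z^*_i)$, $\min\textrm{supp}_E(z_i)=m_i$, $\|z_i\|=1$, and $z^*_i(z_i)\geq\tfrac{1}{K}-\vp$ (where $\tfrac{1}{K}$ becomes $1$ in the bimonotone case). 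Such a $z_i$ is built by first picking a near-norming vector of $z^*_i$ inside the span of $\textrm{supp}_{E^*}(z^*_i)$ and, if its minimal $E$-support exceeds $m_i$, adding a small multiple of a unit vector in $E_{m_i}$ on which $z^*_i$ is positive (available because $m_i$ lies in the support of $z^*_i$), followed by renormalization. The $z_i$ form a block sequence in $Z$ with pairwise disjoint $E$-supports, so $z^*_i(z_j)=0$ for $i\neq j$, and hypothesis $(a)$ gives $\|\sum b_iz_i\|\leq C\|\sum b_iv_{m_i}\|_V\leq C$. Then
$$\sum_i a_ib_iz^*_i(z_i)\;=\;\Big(\sum_i a_iz^*_i\Big)\Big(\sum_j b_jz_j\Big)\;\leq\;C\Big\|\sum_i a_iz^*_i\Big\|.$$
Since $a_ib_i\geq 0$ and $z^*_i(z_i)\geq\tfrac{1}{K}-\vp$, the left side is at least $(\tfrac{1}{K}-\vp)(\|\sum a_iv^*_{m_i}\|_{V^*}-\vp)$; sending $\vp\to 0$ yields the $V^*$-lower block estimate, with sharp constant $C$ in the bimonotone case.

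The implication $(b)\Rightarrow(a)$ is dual. Given a normalized block $(z_i)$ of $(E_i)$ with $m_i=\min\textrm{supp}_E(z_i)$ and scalars $(a_i)$, I take a norming $z^*\in Z^*$ for $w=\sum a_iz_i$, replace $z^*$ by its projection onto $\bigcup_i\textrm{supp}_E(z_i)$ (at cost $K$, landing in $Z^{(*)}$), and decompose into blocks $z^*_i$ with $\textrm{supp}_{E^*}(z^*_i)\subseteq\textrm{supp}_E(z_i)$, giving $z^*(w)=\sum_ia_iz^*_i(z_i)$. A symmetric $E^*_{m_i}$-perturbation produces blocks $\tilde z^*_i$ with $\min\textrm{supp}_{E^*}(\tilde z^*_i)=m_i$ and $\tilde z^*_i(z_i)$ close to $z^*_i(z_i)$; applying $(b)$ to $(\tilde z^*_i/\|\tilde z^*_i\|)$ combined with the $V$--$V^*$ duality bounds $|z^*(w)|$ by the required multiple of $\|\sum a_iv_{m_i}\|_V$.

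The main obstacle, identical in both directions, is that the constructed auxiliary vectors must have minimal support \emph{exactly} equal to $m_i$ rather than somewhere to the right. Because $V$ is not assumed right dominant, no shift of index can be absorbed into the estimate, and the small perturbation at $E_{m_i}$ (respectively $E^*_{m_i}$) above is the essential device that enforces this. Tracking the constants through that perturbation and through the norming step accounts precisely for the discrepancy between the general constants and the sharp constant asserted for the bimonotone case in the ``Moreover'' part.
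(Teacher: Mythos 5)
The paper itself gives no proof of this proposition (it is quoted from \cite{OSZ2}), so I am evaluating your argument on its own terms. Your overall strategy -- dualizing through an auxiliary block sequence paired exactly against the given one, with a small leading-coordinate perturbation to pin the minimal support to $m_i$ -- is the right one, and your treatment of $(a)\Rightarrow(b)$ is essentially sound (one small imprecision: the near-norming vector should be taken in $\mathrm{span}\{E_j : \min\mathrm{supp}(z^*_i)\le j\le \max\mathrm{supp}(z^*_i)\}$, the interval hull, rather than in $\mathrm{span}\{E_j : j\in\mathrm{supp}_{E^*}(z^*_i)\}$; the set $\mathrm{supp}_{E^*}(z^*_i)$ need not be an interval, so the projection onto it is not controlled by $K$, whereas the interval projection is).

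In $(b)\Rightarrow(a)$ there is a genuine gap in the step ``replace $z^*$ by its projection onto $\bigcup_i\mathrm{supp}_E(z_i)$ (at cost $K$).'' Since $\bigcup_i\mathrm{supp}_E(z_i)$ is in general a union of arbitrarily many disjoint intervals, the corresponding coordinate projection is not bounded by $K$, nor by any constant independent of the block sequence; a basis as simple as the summing basis of $c_0$ already shows the norm of a two-interval projection can exceed the projection constant. Your subsequent estimate relies on $\|\sum_i z^*_i\|\le K$, so this is not merely a constant you lose but a bound the argument never establishes. The fix is to partition a genuine interval instead: set $m_{N+1}=\max\mathrm{supp}_E(z_N)+1$ and define $z^*_i = P^{E^*}_{[m_i,m_{i+1})}z^*$. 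Then $\sum_i z^*_i = P^{E^*}_{[m_1,m_{N+1})}z^*$ has norm $\le K$, each $z^*_i$ still has $\mathrm{supp}_{E^*}(z^*_i)\subseteq[m_i,m_{i+1})$ with $\min\mathrm{supp}_{E^*}(z^*_i)\ge m_i$ (so your $E^*_{m_i}$-perturbation still applies), and $z^*_i(z_j)=\delta_{ij}\,z^*(z_j)$ because $\mathrm{supp}_E(z_j)\subseteq[m_j,m_{j+1})$. With this decomposition the rest of your argument -- normalize to $u^*_i=\tilde z^*_i/\|\tilde z^*_i\|$, apply $(b)$, pair through $V$--$V^*$ duality with $c_i = z^*_i(z_i)$, and use $1$-unconditionality together with $|c_i|\le\|z^*_i\|$ -- goes through and yields the stated constants, including the sharp constant $C$ when $K=1$.
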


It is important to note that if a Banach space $Z$ has an FDD
$(E_n)$ which satisfies subsequential $V$ upper block estimates where
$V=(v_i)$ is weakly null, then $(E_n)$ is shrinking.  Indeed, any
normalized block sequence of $(E_n)$ is dominated by a weakly null
sequence, and is thus weakly null.  Thus if $V$ is weakly null, a
necessary condition for a Banach space $X$ to be isomorphic to a
quotient or subspace of a Banach space with an FDD satisfying
subsequential $V$-upper block estimates is that $X$ have separable dual.
This is important as spaces with separable dual may be analyzed
using {\em weakly null trees} . In this paper we will
  need in particular {\em weakly null even trees} (see \cite{OSZ2}).

In order to index weakly null even trees, we denote
$$T_\infty^{\textrm{even}}=\{(n_1,n_2,...,n_{2\ell})\,:\,\,n_1<n_2<...<n_{2\ell}\textrm{ are in }\N\textrm{ and }\ell\in\N\}.$$

\begin{defn}
If $X$ is a Banach space, an indexed family  $(x_\alpha)_{\alpha\in
T_\infty^{\textrm{even}}}\subset X$ is called an {\em even
tree}. Sequences of the form
$(x_{n_1,...,n_{2\ell-1},k})_{k=n_{2\ell-1}+1}^\infty$ are called
nodes. Sequences of the form
$(n_{2\ell-1},x_{n_1,...,n_{2\ell}})_{\ell=1}^\infty$ are called
branches. A normalized tree, i.e.~one with $||x_{\alpha}||=1$ for all $\alpha\in
T_\infty^{\textrm{even}}$, is called {\em weakly null} if
every node is a weakly null sequence. 

If $T'\subset T_\infty^{\textrm{even}}$  is
closed under taking restrictions so that for each $\alpha\in
T'\cup\{\emptyset\}$ and for each $m\in\N$ the set
$\{n\in\N:\,(\alpha,m,n)\in T'\}$ is either empty or has infinite
size, and moreover the latter occurs for infinitely many values of
$m$, then we call $(x_\alpha)_{\alpha\in T'}$ a \emph{full subtree of
}$(x_\alpha)_{\alpha\in T_\infty^{\textrm{even}}}$. Note that
$(x_\alpha)_{\alpha\in T'}$ could then be relabeled to a family
indexed by $T_\infty^{\textrm{even}}$, and note that the branches of
$(x_\alpha)_{\alpha\in T'}$ are branches of
$(x_\alpha)_{\alpha\in T_\infty^{\textrm{even}}}$ and that the nodes of
$(x_\alpha)_{\alpha\in T'}$ are subsequences of certain nodes of
$(x_\alpha)_{\alpha\in T_\infty^{\textrm{even}}}$.

In case that $X$ has an FDD
$(E_i)$, we say that
  a normalized tree is a {\em block tree} (with respect to $(E_i)$) if
 every node is a block sequence with respect to   $(E_i)$.
Note that every weakly null tree has a full subtree which is a
perturbation of a block tree, and that if $(E_i)$ is shrinking, then
every block tree is weakly null.
\end{defn}

If $Z$ is a Banach space with an FDD $(E_n)$, and $X$ is a closed
subspace of $Z$ then any weakly null even tree in $X$ has a branch
equivalent to a block basis of $(E_n)$. Thus if $(E_n)$ satisfies
subsequential $V$-upper block estimates, then every weakly null even
tree in $X$ has a branch
$(n_{2\ell-1},x_{n_1,...,n_{2\ell}})_{\ell=1}^\infty$ such that
$(x_{n_1,...,n_{2\ell}})_{\ell=1}^\infty$ is dominated by
$(v_{n_{2\ell-1}})_{\ell=1}^\infty$.  We make this into a coordinate
free condition with the following definition.

\begin{defn}
Let $X$ be a Banach space, $V=(v_i)$ be a normalized 1-unconditional
basis, and $1\leq C<\infty$. We say that $X$ {\em satisfies
subsequential } $C$-$V$-{\em upper tree estimates} if every weakly
null even tree $(x_\alpha)_{\alpha\in T_\infty^{\textrm{even}}}$ in
$X$ has a branch
$(n_{2\ell-1},x_{n_1,...,n_{2\ell}})_{\ell=1}^\infty$ such that
$(x_{n_1,...,n_{2\ell}})_{\ell=1}^\infty$ is $C$-dominated by
$(v_{n_{2\ell-1}})_{\ell=1}^\infty$.

 We say that $X$ {\em satisfies subsequential }$V$-{\em upper
 tree estimates} if it satisfies subsequential $C$-$V$-upper tree estimates for
 some $1\leq C<\infty$.

If $X$ is a subspace of a dual space, we say that $X$
 {\em satisfies subsequential } $C$-$V$-{\em lower
}$w^*${\em tree estimates} if every $w^*$ null even tree
$(x_\alpha)_{\alpha\in T_\infty^{\textrm{even}}}$ in $X$ has a
branch $(n_{2\ell-1},x_{n_1,...,n_{2\ell}})_{\ell=1}^\infty$ such
that $(x_{n_1,...,n_{2\ell}})_{\ell=1}^\infty$ $C$-dominates
$(v_{n_{2\ell-1}})_{\ell=1}^\infty$.

\end{defn}

% A comprehensive survey on the Szlenk index can be found in \cite{La}.
%  We will need the following description of the Szlenk index
%  using Tsirelson spaces. It can be deduced
%   from results in \cite{OSZ3}:
%   the implication (i)$\Rightarrow$(ii) follows from
%   \cite[Corollary 19]{OSZ3} in the same way as the implication
%    (i)$\Rightarrow$(ii)  of \cite[Theorem 21]{OSZ3} (the assumed reflexivity is irrelevant
%     for that part). The implication (ii)$\Rightarrow$(i) follows from the computation
%      of the Szlenk index of $T_{\alpha,c}$ in \cite[Proposition 16]{OSZ3} as well
%       as the description of the Szlenk index  provided in \cite{AJO}(see
%        also \cite[Theorem 12]{OSZ3}).

%  We denote the Szlenk index of a separable Banach space by $\Sz(X)$.

% \begin{prop}\label{P00}
% For a Banach space $X$ with separable dual and an ordinal $\alpha<\omega_1$
% the following conditions are equivalent:
% \begin{enumerate}
% \item[(i)] $\Sz(X)\le \omega^{\alpha \omega}$.
% \item[(ii)] There is a $0<c<1$ so that $X$ satisfies subsequential  $(t^{(\alpha,c)}_i)$
%  upper tree estimates, where $(t^{(\alpha,c)}_i)$ denotes the canonical basis
%   of $T_{\alpha,c}$.
% \end{enumerate}

% \end{prop}

We have a property of trees and a property of FDDs, and our goal is
to show how they are related.  Zippin's theorem allows us to embed a
Banach space with separable dual into a space with shrinking FDD.
Our next step will be to then pass information about trees in the
space to information about $\bar{\delta}$-skipped blocks of the FDD,
which we define here.

\begin{defn}
Let $\textbf{E}=(E_i)$ be an FDD for a Banach space $Y$ and let
$\bar{\delta}=(\delta_i)$ with $\delta_i\downarrow0$.  A sequence
$(y_i)\subset S_Y$ is called a $\bar{\delta}-\textsl{skipped block
w.r.t. } (E_i)$ if there exist integers $1=k_0<k_1<...$ so that for
all $i\in\N$,
$$\Vert P^E_{(k_{i-1},k_i)}y_i-y_i\Vert<\delta_i.$$
\end{defn}

 The following proposition is an adaptation
of Proposition~2.18 in \cite{OSZ2} for the case $(E_i)$ is shrinking,
 but not necessarily boundedly complete and for the case where $X$ is a $w^*$-closed subspace of a dual space. We will need to  first
 recall some notation introduced in \cite{OSZ2}.

Given a Banach space $X$, we let $(\N\!\times\! S_X)^\omega$ denote the
set of all sequences $(k_i,x_i)$, where $k_1<k_2<\dots$ are
positive integers, and $(x_i)$ is a sequence in $S_X$. We equip the
set $(\N\!\times\! S_X)^\omega$ with the product topology of the
discrete topologies of $\N$ and $S_X$. Given $ A\subset (\N\ktimes
S_X)^\omega$ and $\vp\kge 0$, we let
\[
 A_\vp = \Big\{ \big((\ell_i,y_i):i\in\N\big)\kin (\N\ktimes S_X)^\omega :\,\exists\,
\big((k_i,x_i):i\in\N\big)\kin A\quad k_i\kleq\ell_i\,,\ \|x_i\kminus
  y_i\|\kle\vp\kcdot 2^{-i}\ \forall\, i\kin\N\Big\}\ ,
\]
and we let $\overline{ A}$ be the closure of $ A$ in
$(\N\ktimes S_X)^\omega$.

Given $ A\ksubset (\N\ktimes S_X)^\omega$, we say that an even tree
$(x_\alpha)_{\alpha\in   T_\infty^{\text{even}}}$ in $X$ \emph{has a branch in $ A$ }if
there exist $n_1\kle n_2\kle\dots$ in $\N$ such that
$\big((n_{2i-1},x_{(n_1,n_2,\dots,n_{2i})}):i\in\N\big)\kin A$.

Let $Z$ be a Banach space with an FDD $(E_i)$ and assume that $Z$
contains $X$. Let $C$ be the projection constant of $(E_i)$ in $Z$.
For each $m\kin\N$ we set $Z_m\keq\overline{\bigoplus _{i>m}
    E_i}$. Given $\vp\kge 0$, we consider the following game between
  players S (subspace chooser) and P (point chooser). The game has an
  infinite sequence of moves; on the $n^{\text{th}}$ move ($n\kin\N$)
  S picks $k_n,m_n\kin\N$ and P responds by picking $x_n\kin S_X$
    with $d(x_n,Z_{m_n})\kle\vp'\kcdot 2^{-n}$, where
    $\vp'\keq\min\{\vp,1\}$. S wins the game if the sequence
    $(k_i,x_i)$ the players generate ends up in
    $\overline{A_{5C\vp}}$, otherwise P is declared the winner. We
    will refer to this as the $(A,\vp)$-game. Note that the definition
    of $S$ winning is slightly different from the one given
    in~\cite{OSZ2}. This is because of the extra complication of
    dealing with the non-reflexive case.

\begin{prop}\label{P1}
Let $X$ be an infinite-dimensional closed subspace of a space $Z$
with an FDD $(E_i)$. Let $A\subset(\N\times S_X)^\omega$. If $(E_i)$
is shrinking, or if $Z$ is a dual space with $(E_i)$ boundedly
complete and $X$ $w^*$ closed in $Z$, then the following are equivalent.
\begin{enumerate}
\item[(a)] For all $\vp>0$ there exists $(K_i)\subset\N$ with
$K_1<K_2<...$, $\overline{\delta}=(\delta_i)\subset(0,1)$ with
$\delta_i\searrow0$, and a blocking $F=(F_i)$ of $(E_i)$ such that
if $(x_i)\subset S_X$ is a $\overline{\delta}$-skipped block
sequence of $(F_n)$ in $Z$ with
$||x_i-P^F_{(r_{i-1},r_i)}x_i||<\delta_i$ for all $i\in\N$, where
$1\leq r_0<r_1<r_2<...$, then
$(K_{r_{i-1}},x_i)\in\overline{A_\vp}$.
\item[(b)]For all $\vp>0$ $S$ has a winning strategy for the
$(A,\vp)$-game.
\end{enumerate}
If $(E_i)$ is shrinking, then (a) and (b) are equivalent to
\begin{enumerate}
\item[(c)] for all $\vp>0$ every normalized, weakly null even
tree in $X$ has a branch in $\overline{A_\vp}$.
\end{enumerate}
If $Z$ is a dual space with $(E_i)$ boundedly
complete and $X$ $w^*$ closed in $Z$, then (a) and (b) are equivalent
to
\begin{enumerate}
\item[(d)] for all $\vp>0$ every normalized, $w^*$ null even
tree in $X$ has a branch in $\overline{A_\vp}$.

\end{enumerate}
\end{prop}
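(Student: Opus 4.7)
The plan is to establish the cycle (a) $\Rightarrow$ (b) $\Rightarrow$ (c)/(d) $\Rightarrow$ (a), with the first two implications common to both the shrinking and the boundedly complete $w^*$-closed setting, and only the final step using the specific hypothesis.

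First I would prove (a) $\Rightarrow$ (b) by turning the blocking data into a strategy. Given $\vp>0$, let $(K_i)$, $\overline\delta=(\delta_i)$ and the blocking $F=(F_i)$ be as in (a). At the $n$-th move of the $(A,\vp)$-game, S inspects $x_{n-1}$ and, using that finitely many coordinates of $(E_i)$ are already determined, locates an integer $r_{n-1}$ such that $\Vert x_{n-1}-P^F_{(r_{n-2},r_{n-1})}x_{n-1}\Vert<\delta_{n-1}$; then S plays $k_n\keq K_{r_{n-1}}$ and chooses $m_n$ so that $Z_{m_n}\ksubseteq\overline{\bigoplus_{i>r_{n-1}}F_i}$. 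Since P is forced to produce $x_n$ with $d(x_n,Z_{m_n})\kleq\vp'\kcdot 2^{-n}$, the sequence $(x_i)$ is within $\vp'\kcdot 2^{-i}$ of a $\overline\delta$-skipped block of $(F_i)$, and a standard perturbation (of size at most $5C\vp$ after compounding the projection constant $C$) puts $(k_i,x_i)$ into $\overline{A_{5C\vp}}$, so S wins.

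Next I would prove the tree implications (b) $\Rightarrow$ (c) and (b) $\Rightarrow$ (d) by using S's winning strategy $\sigma$ to prune the given tree. Fix a normalized weakly null (resp.~$w^*$ null) even tree $(x_\alpha)$. Build a branch inductively: suppose indices $(n_1,\dots,n_{2i-2})$ have been chosen and $\sigma$ now calls for $(k_i,m_i)$. We must find $n_{2i-1}\kgeq k_i$ and $n_{2i}\kge n_{2i-1}$ with $d(x_{(n_1,\dots,n_{2i})},Z_{m_i})\kleq\vp'\kcdot 2^{-i}$. The projection $P^E_{[1,m_i]}$ has finite-dimensional range, so on any weakly null node it converges to $0$ in norm; in the dual case, since $(E_i)$ is boundedly complete, $P^E_{[1,m_i]}$ is $w^*$-$w^*$-continuous with finite-dimensional range, hence norm-converges to $0$ on any $w^*$ null node. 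Either way, large enough $n_{2i-1},n_{2i}$ exist. The branch so produced is played against $\sigma$, so it lies in $\overline{A_{5C\vp}}$, as required by (c)/(d) (with a harmless reparametrisation of $\vp$).

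The main obstacle is the converse (c) $\Rightarrow$ (a) and (d) $\Rightarrow$ (a), where one must manufacture the blocking, the sequence $(K_i)$ and $\overline\delta$ from the tree hypothesis. I would argue by contrapositive along the lines of \cite[Prop.~2.18]{OSZ2}: assuming (a) fails at some $\vp_0\kge 0$, one shows that for \emph{every} candidate blocking $F$, every $(K_i)$ and every $\overline\delta$ there is a $\overline\delta$-skipped block $(x_i)\ksubset S_X$ violating $(K_{r_{i-1}},x_i)\kin\overline{A_{\vp_0}}$. By a diagonal inductive construction, one threads together such witnesses across all choices to produce a full even tree in $X$ all of whose branches lie outside $\overline{A_{\vp_0}}$. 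The delicate point, and the place where the distinction between the shrinking and the $w^*$-closed cases really enters, is ensuring the constructed tree is weakly null (respectively $w^*$ null): in the shrinking case this uses that block perturbations of $(E_i)$ in $X$ are automatically weakly null together with the Hahn–Banach argument to push successive vectors close to tails $Z_m$; in the $w^*$-closed boundedly complete case one uses $w^*$-metrisability of bounded sets of $Z$ (as $Z^{(*)}$ is separable) and $w^*$-$w^*$-continuity of the tail projections $P^E_{(m,\infty)}$ to arrange $w^*$ convergence of nodes. Once such a bad tree is constructed, its existence contradicts (c) (resp.~(d)) for $\vp\keq\vp_0$, completing the proof.
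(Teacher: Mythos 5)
Your proposal takes a different cycle from the paper. You propose to prove (a)$\Rightarrow$(b)$\Rightarrow$(c)/(d)$\Rightarrow$(a), whereas the paper proves (b)$\Rightarrow$(a)$\Rightarrow$(c)/(d) (citing \cite[Prop.~2.18]{OSZ2}) and supplies a new argument for (c)/(d)$\Rightarrow$(b). Two of your three implications have genuine gaps.

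The first gap is in (a)$\Rightarrow$(b). After S plays $(k_n,m_n)$ and P responds with $x_n\kin S_X$ satisfying $d(x_n,Z_{m_n})\kleq\vp'\kcdot 2^{-n}$, the sequence $(x_n)$ need \emph{not} be a $\overline\delta$-skipped block: the lower tail $\|P^F_{[1,r_{n-1}]}x_n\|$ is bounded only by $C\vp'\kcdot 2^{-n}$, and nothing forces $\delta_n\kgeq C\vp'\kcdot 2^{-n}$. Condition (a) merely asserts the \emph{existence} of some $\overline\delta$ with the skipped-block property, and one may freely shrink $\overline\delta$ but never enlarge it; if $\delta_n$ decays faster than $2^{-n}$, the game's output sequence is never a $\overline\delta$-skipped block, so (a) has nothing to say about it. Your ``standard perturbation'' would need to replace $(x_n)$ by a genuine $\overline\delta$-skipped block in $S_X$ nearby, but $X$ is an abstract subspace and need not contain any vector near $x_n$ whose $F$-support is contained in $(r_{n-1},r_n)$ up to error $\delta_n$. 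The paper avoids the issue entirely by establishing (b)$\Rightarrow$(a) instead: the blocking data is extracted from S's winning strategy, so the constants are compatible by construction.

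The second gap is in (c)/(d)$\Rightarrow$(a). You propose to pass directly from $\neg$(a) to $\neg$(c)/$\neg$(d) by ``threading together'' witnesses for every blocking $F$, every $(K_i)$ and every $\overline\delta$ via ``a diagonal inductive construction.'' There are uncountably many blockings and null sequences $\overline\delta$, and there is no visible way to organize such a diagonalization into a single weakly null (resp.\ $w^*$-null) even tree all of whose branches avoid $\overline{A_{\vp_0}}$. Moreover this is not in fact what \cite[Prop.~2.18]{OSZ2} does: there the tree implication is obtained from $\neg$(b), not $\neg$(a), by invoking Martin's Borel determinacy to convert ``S has no winning strategy'' into ``P has a winning strategy $\phi$,'' and the bad tree is then simply $(\phi(\alpha))_\alpha$. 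Determinacy is the indispensable ingredient, and your outline omits it entirely. The paper's proof of (c)$\Rightarrow$(b) follows exactly this determinacy route, together with a preliminary perturbation lemma (taking convex combinations of a weakly Cauchy subsequence to turn a sequence $(z_i)$ with $d(z_i,Z_i)$ small into a genuinely weakly null sequence using the shrinking hypothesis), which you gesture towards but do not isolate.

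Your (b)$\Rightarrow$(c)/(d) step is essentially sound: running the even tree against S's winning strategy, using that $P^E_{[1,m]}$ is norm-null on weakly null (resp.\ $w^*$-null) nodes and that $\overline{A_\vp}$ is upward-closed in the integer coordinates, produces the required branch after reparametrizing $\vp$ to absorb the factor $5C$. But with the other two links of the cycle broken, the proof as written does not go through; you should instead follow the paper's cycle and supply the determinacy argument and perturbation lemma for (c)/(d)$\Rightarrow$(b).
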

\begin{proof}
The proofs of the implications
$(b)\Rightarrow(a)\Rightarrow(d)\Rightarrow(b)$ shown in the
reflexive case \cite[Proposition 2.18]{OSZ2} still hold in the
nonreflexive case when $Z$ is a dual space with $(E_i)$ boundedly
complete and $X$ $w^*$ closed in $Z$: we use $w^*$ compactness of
$B_X$, and the fact that $(E_i)$ is biorthogonal to a shrinking FDD of
a predual of $Z$ (instead of weak compactness of $B_X$ and the
shrinking property of $(E_i)$ as in~\cite{OSZ2}).

For the case in which $(E_i)$ is shrinking, the proofs of the
implications $(b)\Rightarrow(a)\Rightarrow(c)$  shown for the
reflexive case still work. The proof for the implication
$(c)\Rightarrow(b)$ requires some adaptation which we provide here.

We start with a preliminary result: let
$Z_m=\overline{\bigoplus_{i>m}E_i}$ as before, and let $C$ be the
projection constant of $(E_i)$ in $Z$. Then for every $\eta>0$ with
$(1+C)\eta<1$ and for every sequence $(z_i)\subset S_X$ with
$d(z_i,Z_i)<\eta$ for all
$i\in\N$, there is a subsequence $(x_i)$ of $(z_i)$ and a weakly null
sequence $(y_i)\subset S_X$ such that $||x_i-y_i||<2(1+C)\eta$ for all
$i\in\N$. Indeed, we can pass to a weakly Cauchy subsequence $(x_i)$ of
$(z_i)$ such that
\begin{equation}\label{P1E1}
||P^E_{[1,n]}(x_i-x_j)||< \eta 2^{-j}\quad\forall n\in\N\textrm{
and }i>j\geq n.
\end{equation}
Since $d(x_i,Z_i)<\eta$, we have $||P^E_{[1,i]}x_i||<C\eta$ for all
$i\in\N$. Since $(E_i)$ is shrinking, the sequence
$(P^E_{(i,\infty)}x_i)$ is weakly null, and so for each $n\in\N$ there
exists $(a_i^{(n)})_{i=n}^{K(n)}=(a_i)_{i=n}^K\subset [0,1]$ such that
$\sum_{i=n}^K a_i=1$ and $||\sum_{i=n}^K a_i
P^E_{(i,\infty)}x_i||<\eta$. Set
$$y_n=\frac{x_n-\sum a_i x_i}{||x_n-\sum a_i x_i||}.$$
We have that
\begin{align*}
\left\Vert\sum_{i=n}^K a_i x_i\right\Vert\leq\sum_{i=n}^K
a_i||P^E_{[1,i]}x_i||+\left\Vert\sum_{i=n}^K a_i
P^E_{(i,\infty)}x_i\right\Vert <\sum_{i=n}^K a_i C\eta+\eta
=(1+C)\eta\ ,
\end{align*}
which implies that $||x_n-y_n||<2(1+C)\eta$. We also have
\begin{align*}
||P_{[1,n]}y_n||\leq 2\Big\|P_{[1,n]}(x_n-\sum a_i x_i)\Big\|
\leq2\sum a_i||P_{[1,n]}(x_n-x_i)||<2\eta 2^{-n}\ ,
\end{align*}
which tends to zero as $n\rightarrow\infty$. Hence $(y_n)\subset S_X$
is weakly null as $(E_i)$ is shrinking.

We now continue with the proof of the implication
$(c)\Rightarrow(b)$. Assume that $S$ does not have a winning strategy
for the $(A,\vp)$ game for some $\vp>0$.
As this game is determined \cite{Ma} , there exists a winning
strategy $\phi$ for the point chooser.  The function $\phi$ takes
values in $S_X$:   if $(k_i),(m_i)\in[\N]^\omega$  are the choices
by player $S$ and  if $z_n=\phi(k_1,m_1,...,k_n,m_n)$ for all
$n\in\N$, then $d(z_i,Z_{m_i})<\vp 2^{-i}$ for all $i\in\N$ and
$(k_i,z_i)\not\in \overline{A_{5C\vp}}$. We can, of course, assume that
$(1+C)\vp<1$. For each $\alpha\in T_\infty^{\textrm{even}}$ set
$z_\alpha=\phi(\alpha)$. Then $(z_\alpha)_{\alpha\in
  T_\infty^{\textrm{even}}}$ is a normalized even tree in $X$ no
branch of which is in $\overline{A_{5C\vp}}$. Its nodes
$(z_i)=(z_{(k_1,\dots,k_{2\ell-1},i)})_{i>k_{2\ell-1}}$ satisfy
$d(z_i,Z_i)<\vp 2^{-\ell}$ for all $i>k_{2\ell-1}$. By repeated
applications of our preliminary observation we can find a full subtree
$(x_\alpha)_{\alpha\in T_\infty^{\textrm{even}}}$ of
$(z_\alpha)_{\alpha\in T_\infty^{\textrm{even}}}$ and a weakly
null even tree $(y_\alpha)_{\alpha\in T_\infty^{\textrm{even}}}$ such
that $||x_\alpha-y_\alpha||<2(1+C)\vp
2^{-\ell}$ for all $\ell\in\N$ and $\alpha=(k_1,\dots,k_{2\ell})\in
T_\infty^{\textrm{even}}$. Since no branch of $(x_\alpha)_{\alpha\in
  T_\infty^{\textrm{even}}}$ is in
$\overline{A_{5C\vp}}$, it follows that no branch of
$(y_\alpha)_{\alpha\in T_\infty^{\textrm{even}}}$ is in
$\overline{A_{C\vp}}$. Thus $(c)$ fails.
\end{proof}
\begin{rem}
We will be applying Proposition~\ref{P1} for the case
$A=\{(n_i,x_i)_{i=1}^\infty \,|  (v_{n_i}) \textrm{ dominates  }
(x_i)\}$ where $(v_i)$ is a 1-unconditional basic sequence. We will
also be repeatedly applying the technique of blocking FDDs. For this
reason it is important that properties of $\bar{\delta}$-skipped
blocks are preserved by blockings. This follows at once from the
following simple observation: Assume that $(E_i)$ is an FDD with
projection constant $K$, and $(H_i)$ is a blocking of $(E_i)$. Then a
$\bar{\delta}$-skipped block of $(H_i)$ is a $2K\bar{\delta}$-skipped
block of $(E_i)$.
\end{rem}

We will be concerned with a space $X$ which satisfies subsequential
$V$-upper tree estimates.  However the nature of our proofs require
us to work with $X^*$ as well. This is because some of the blocking
techniques which we use depend on the FDD being boundedly
complete. Before stating a duality result on upper tree estimates, we
need the following definition: A basic sequence $V=(v_i)$ is {\em $C$-right dominant }(respectively, \emph{$C$-left-dominant}) if for all
sequences $m_1\kle m_2\kle \dots$ and $n_1\kle n_2\kle\dots$ of
positive integers with $m_i\kleq n_i$ for all $i\kin\N$ we have that
$(v_{m_i})$ is $C$-dominated by (respectively, $C$-dominates)
$(v_{n_i})$. We say that $(v_i)$ is \emph{right-dominant }or
\emph{left-dominant }if for some $C\kgeq 1$ it is $C$-right-dominant
or $C$-left-dominant, respectively.

\begin{lem}\label{L3}
Let $X$ be a Banach space with separable dual, and let $V=(v_i)$ be a
normalized, 1-unconditional, right dominant basis.  If $X$ satisfies
subsequential $V$-upper tree estimates, then $X^*$ satisfies
subsequential $V^*$-lower $w^*$ tree estimates.
\end{lem}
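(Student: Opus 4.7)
The plan is to dualize the $V$-upper tree estimate on $X$ via a companion-tree argument. Given a normalized $w^*$-null even tree $(x^*_\alpha)_{\alpha \in T_\infty^{\textrm{even}}}$ in $X^*$, I construct a weakly null tree $(\tilde y_\alpha)$ in $S_X$ that is approximately biorthogonal to $(x^*_\alpha)$ along each branch, and apply the hypothesis to this companion tree to extract a branch witnessing the desired lower $V^*$ estimate.

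To set up the companion tree, for each $\alpha$ use Hahn--Banach to pick $\hat y_\alpha \in B_X$ with $\langle x^*_\alpha, \hat y_\alpha\rangle > 1 - \vp$. The node sequences $(\hat y_{n_1,\ldots,n_{2\ell-1},k})_k$ need not be weakly null in $X$; however, since $X^*$ is separable, $X$ contains no copy of $\ell_1$, so Rosenthal's theorem provides weakly Cauchy subsequences along each node. Exploiting the pair structure of the even tree (the index $n_{2\ell}$ within each node admits a ``difference'' modification turning weakly Cauchy into weakly null), a standard nested-subsequence construction yields a full subtree and modified entries $\tilde y_\alpha \in S_X$ forming a weakly null tree. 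The $w^*$-nullness of $(x^*_\alpha)$ along nodes ensures, after further diagonal pruning, that the diagonal pairings $c_\alpha := \langle x^*_\alpha, \tilde y_\alpha\rangle$ remain bounded below by a constant $c > 0$ and that, along every branch, the off-diagonal cross-pairings $\langle x^*_\alpha, \tilde y_\beta\rangle$ with $\alpha \neq \beta$ can be made arbitrarily small.

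With the companion tree in hand, apply the $C$-$V$-upper tree estimate to $(\tilde y_\alpha)$ to extract a branch $(n_{2\ell-1}, \tilde y_{n_1,\ldots,n_{2\ell}})_\ell$ on which $(\tilde y_{n_1,\ldots,n_{2\ell}})$ is $C$-dominated by $(v_{n_{2\ell-1}})$. For any scalars $(a_\ell)$ and any $(b_\ell)$ with $\|\sum_\ell b_\ell v_{n_{2\ell-1}}\|_V \leq 1$, set $y := \sum_\ell (b_\ell/c_\ell)\, \tilde y_{n_1,\ldots,n_{2\ell}}$. By $1$-unconditionality of $V$ and the upper domination, $\|y\|_X \leq C/c$; by the approximate biorthogonality, $\langle \sum_\ell a_\ell x^*_{n_1,\ldots,n_{2\ell}},\, y\rangle$ is within a small error of $\sum_\ell a_\ell b_\ell$. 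Taking the supremum over admissible $(b_\ell)$ then yields
\[
\Big\|\sum_\ell a_\ell v^*_{n_{2\ell-1}}\Big\|_{V^*} \;\leq\; \frac{C}{c}\,\Big\|\sum_\ell a_\ell x^*_{n_1,\ldots,n_{2\ell}}\Big\|_{X^*},
\]
establishing subsequential $V^*$-lower $w^*$ tree estimates for $X^*$.

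The principal obstacle is the biorthogonal modification in the non-reflexive setting: weak limits of $X$-sequences can sit in $X^{**}\setminus X$ and cannot be subtracted inside $X$ directly. The even-tree pair structure, combined with Rosenthal's theorem, supplies the necessary flexibility for producing weakly null modifications without sacrificing the diagonal pairing. Right-dominance of $V$ enters technically when passing to subtrees with enlarged indices, guaranteeing that the resulting upper $V$-tree domination remains usable after the pruning.
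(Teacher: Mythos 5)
Your proposal is correct, but the key construction of the weakly null companion tree goes by a genuinely different route than the paper's. The paper invokes the DFJP factorization to realize $X$ as a quotient $Q\colon Z \twoheadrightarrow X$ of a space $Z$ with a shrinking bimonotone FDD $(F_i)$, so that $Q^*\colon X^*\hookrightarrow Z^*$ is an isometric embedding onto a $w^*$-closed subspace. Given a $w^*$-null node $(x^*_i)$ in $S_{X^*}$, the paper passes to a subsequence with $\|P^{F^*}_{[1,i)}Q^*x'_i\|$ small, uses bimonotonicity to pick norming vectors $z_i\in S_Z$ supported away from the first $i$ coordinates, and observes that $(z_i)$, being coordinate-wise null in a shrinking FDD, is weakly null; setting $y_i = Qz_i/\|Qz_i\|$ yields the weakly null companion vectors with $x'_i(y_i)>\tfrac34$. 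You instead work entirely inside $X$: you norm each $x^*_\alpha$ by Hahn--Banach, then use that $X^*$ separable forces $X$ to contain no copy of $\ell_1$, so by Rosenthal's $\ell_1$-theorem each node of the norming tree has a weakly Cauchy subsequence, and you turn weakly Cauchy into weakly null by passing to consecutive differences (the pairing survives because $w^*$-nullness of the $x^*$-node kills the cross term, the norm of the difference is bounded between $1-2\vp$ and $2$, and the differenced sequence is weakly null since the weak limits coincide in $X^{**}$). Both routes land on the same approximately biorthogonal weakly null tree, and the subsequent branch extraction, duality computation for $\|\sum a_\ell v^*_{n_{2\ell-1}}\|$, and the final use of right-dominance of $V$ to pass from the pruned indices to the original ones, are essentially identical. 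Your approach is more elementary in that it avoids DFJP and works intrinsically in $X$ at the cost of a more delicate tree surgery; the paper's is cleaner because coordinate-wise nullness in a shrinking FDD gives weak nullness for free, without the nested weakly-Cauchy/difference book-keeping. Two small points to tighten: the passage from right-dominance of $V$ to left-dominance of $V^*$ deserves a sentence (it is used but not verified), and the pruning that simultaneously enforces a weakly Cauchy subsequence, a small cross term $\langle x^*_\alpha, \hat y_{\alpha'}\rangle$, and small off-diagonal pairings along branches should be laid out as a single nested induction over the tree to make clear that these requirements are compatible.
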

\begin{proof}
$X$ has separable dual, so by  \cite[Corollary 8]{DFJP} there exists
a space $Z$ with a shrinking and bimonotone FDD $(F_i)$ for which
there is a quotient map $Q:Z\rightarrow X$.  After renorming $X$ we
may assume that it has the quotient norm $||x||=\inf_{Qy=x}||y||$
for all $x\in X$. This gives that $Q^*$ is an isometric embedding of
$X^*$ into $Z^*$. Furthermore, $(F^*_i)$ is a boundedly complete FDD
for $Z^*$ as $(F_i)$ is shrinking.

We next show that if $(x_i)\subset S_{X^*}$ is $w^*$ null, then there
is a subsequence $(x'_i)$ of $(x_i)$ and a weakly null sequence
$(y_i)\subset S_X$ such that $x'_i(y_i)>\frac34$ for all $i\in\N$.

We have 
$(Q^*x_i)$ is $w^*$ null in $Z^*$ as $Q^*$ is $w^*$ to $w^*$
continuous.  Hence there is a subsequence $(x'_i)$ of $(x_i)$ such
that
$||P^{F^*}_{[1,i)}Q^*x'_i||<\frac14$ for all $i$. As $(F_i)$ is bimonotone,
there exists $(z_i)\subset S_Z$ such that
$||P^{F}_{[1,i)}z_i||=0$ and
$Q^*x'_i(z_i)>\frac34$. Note that $||Q(z_i)||>\frac34$ for all $i$,
and that the sequence
$(z_i)$ is coordinate-wise null and
hence weakly null as $(F_i)$ is shrinking. It follows that
$y_i=\frac{Q(z_i)}{||Q(z_i)||}$ defines a weakly null sequence in
$S_X$ with $x'_i(y_i)>\frac34$ for all $i$.

Now let $(x_\alpha)_{\alpha\in T^{\textrm{even}}_\infty}\subset
S_{X^*}$ be a $w^*$ null tree. By repeated applications of the above
result, there is a full subtree $(x'_\alpha)$ of $(x_\alpha)$ and a
weakly null tree $(y_\alpha)$ in $X$ such that
$x'_\alpha(y_\alpha)>\frac34$ for all $\alpha\in
T^{\textrm{even}}_\infty$. Passing to further subtrees if necessary,
we can also assume that for $k<\ell$ in $\N$ and for
$\alpha=(n_1,\dots,n_{2k}),\ \beta=(n_1,\dots,n_{2\ell})$ in
$T^{\textrm{even}}_\infty$ we have $\max\{
|x'_\alpha(y_\beta)|,|x'_\beta(y_\alpha)|\}<4^{-\ell}$.

Let $(n_{2k-1},y_{(n_1,...,n_{2k})})_{k=1}^\infty$ be a branch of
the weakly null tree $(y_\alpha)_{\alpha\in T^{\textrm{even}}_\infty}$
such that $(v_{n_{2k-1}})_{k=1}^\infty$ $C$-dominates
$(y_{(n_1,...,n_{2k})})_{k=1}^\infty$ for some $C\geq 1$.
Let $(a_i)\in\coo$ such that $||\sum a_i v^*_i||=1$. There exists
$(b_i)\in\coo$ such that $||\sum b_i v_i||=1$ and $\sum a_i b_i=1$
as $(v_i)$ is bimonotone. Furthermore,
$\textrm{sign}(a_i)=\textrm{sign}(b_i)$ as $(v_i)$ is
1-unconditional. We have that,
\begin{align*}
1&=||\sum_{i=1}^\infty a_i v_i^*||\\&
 =\sum_{i=1}^\infty a_i b_i\\
&\leq\frac43\sum_{i=1}^\infty a_i b_i x'_{(n_1,...,n_{2i})}(y_{(n_1,...,n_{2i})})\\
&\leq\frac{4}{3}(\sum_{k=1}^\infty a_k
x'_{(n_1,...,n_{2k})})(\sum_{\ell=1}^\infty b_\ell
y_{(n_1,...,n_{2\ell})})+\frac{4}{3}\sum_{k=1}^\infty\sum_{\ell\neq
k}|x'_{(n_1,...,n_{2k})}(y_{(n_1,...,n_{2\ell})})|\\
&\leq \frac{4}{3}(\sum_{k=1}^\infty a_k
x'_{(n_1,...,n_{2k})})(\sum_{\ell}^\infty
b_\ell y_{(n_1,...,n_{2\ell})})+\frac{2}{3}\\
&< C\frac{4}{3}||\sum_{k=1}^\infty a_k
x'_{(n_1,...,n_{2k})}||+\frac{2}{3}.
\end{align*}
Hence $(x'_{(n_1,...,n_{2k})})_{k=1}^\infty$ $4C$-dominates
$(v^*_{n_{2k-1}})_{k=1}^\infty$. Finally, the branch
$(n_{2k-1},x'_{(n_1,...,n_{2k})})$ corresponds to a branch
$(m_{2k-1},x_{(m_1,...,m_{2k})})$ in the original tree with $n_i\leq
m_i$ for all $i\in\N$. Since $(v_i)$ is right dominant, $(v^*_i)$ is
left dominant, and hence $(x_{(m_1,...,m_{2k})})$ dominates
$(v^*_{m_{2k-1}})$. Thus $X^*$ satisfies subsequential $V^*$-lower
$w^*$ tree estimates.
\end{proof}

Proposition \ref{P1} allows us to pass from information about trees
to information about $\bar{\delta}$-skipped blocks of an FDD
$(E_n)$. To go from information about $\bar{\delta}$-skipped blocks
to blocks in general, we will renorm the FDD $(E_n)$ to form a new
space.

Let $Z$ be a space with an FDD $E=(E_n)$ and let $V=(v_i)$ be a
normalized 1-unconditional basic sequence.  The space $Z^V=Z^V(E)$
is defined to be the completion of $\coo(\bigoplus E_n)$ with
respect to the following norm $\Vert\cdot\Vert_{Z^V}$.
$$
\Vert z\Vert_{Z^V}=\max_{k\in\N,\,1\leq
n_0<n_1<...<n_k} \Big\Vert\sum_{j=1}^k\Vert P^E_{[n_{j-1},n_j)}(z)\Vert_Z
\cdot v_{n_{j-1}}\Big\Vert_V\,\,\,\textrm{ for }z\in\coo(E_i).
$$
We note that if $\Vert\cdot\Vert$ and $\Vert\cdot\Vert'$ are
equivalent norms on $Z$ then the corresponding norms
$\Vert\cdot\Vert_{Z^V}$ and $\Vert\cdot\Vert'_{Z^V}$ are equivalent
on $\coo(\bigoplus E_n)$.  This allows us, when examining the space
$Z^V$, to assume that $(E_n)$ is bimonotone in $Z$.  The following
proposition from \cite{OSZ2} is what makes the space $Z_V$ essential
for us. Recall that a basic sequence is called {\em  $C$-block stable}
 for some $C\ge 1$
  if any two normalized
  block bases $(x_i)$ and $(y_i)$ with
  \[
  \max \big( \text{supp} (x_i)\cup\text{supp}(y_i)\big) < \min \big( \text{supp}
  (x_{i+1})\cup\text{supp}(y_{i+1})\big)\qquad\text{for all }i\kin\N
  \]
  are $C$-equivalent. We say that $(v_i)$ is \emph{block-stable }if it
  is $C$-block-stable for some constant~$C$.

The following Proposition recalls some properties of $Z^V$ which were shown
 in \cite{OSZ2}.
\begin{prop}\label{P2}{\rm \cite[Corollary 3.2, Lemma 3.3 and 3.5]{OSZ2}}
Let $V=(v_i)$ be a normalized, 1-unconditional, and C-block-stable
basic sequence.  If $Z$ is a Banach space with an FDD $(E_i)$, then
$(E_i)$ satisfies $2C$-$V$-lower block estimates in $Z^V(E)$.

 If the basis $(v_i)$ is boundedly complete then $(E_i)$ is a boundedly complete
  FDD for $Z^V(E)$.

  If the basis $(v_i)$ is shrinking and if $(E_i)$ is shrinking in $Z$, then
   $(E_i)$ is a shrinking FDD for $Z^V(E)$.

\end{prop}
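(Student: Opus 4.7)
The plan is to verify the three assertions in turn, all flowing from the definition of $\|\cdot\|_{Z^V}$ as a supremum of $V$-norms over partitions.

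For the lower block estimate, let $(z_i)$ be a normalized block sequence of $(E_i)$ in $Z^V(E)$, set $m_i=\min\text{supp}_E(z_i)$, and fix $\eta>0$. For each $i$ select a partition $m_i=p_0^{(i)}<p_1^{(i)}<\dots<p_{k_i}^{(i)}\leq m_{i+1}$ such that the block vector
\[
w_i \;=\; \sum_{j=1}^{k_i}\bigl\|P^E_{[p_{j-1}^{(i)},p_j^{(i)})}z_i\bigr\|_Z\, v_{p_{j-1}^{(i)}}\;\in\;V
\]
satisfies $\|w_i\|_V\geq 1-\eta$. Concatenating these partitions into a single partition of $\N$ and feeding it into the defining supremum of $\|\sum_i a_i z_i\|_{Z^V}$, while exploiting that the supports of the $z_i$ are pairwise disjoint $E$-intervals, yields
\[
\Bigl\|\sum_i a_i z_i\Bigr\|_{Z^V}\;\geq\;\Bigl\|\sum_i|a_i|\,w_i\Bigr\|_V.
\]
Because $(w_i)$ and $(v_{m_i})$ are normalized blocks of $(v_i)$ whose supports interleave correctly, $C$-block-stability of $V$ gives $\|\sum_i|a_i|w_i\|_V\geq C^{-1}(1-\eta)\|\sum_i|a_i|v_{m_i}\|_V$. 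Letting $\eta\to 0$ and using $1$-unconditionality of $V$ produces the claimed estimate, with any residual factor from the projection constant of $(E_i)$ in $Z$ absorbed into the $2C$ constant.

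For the boundedly complete assertion, assume $(v_i)$ is boundedly complete and suppose for contradiction that $(E_i)$ is not boundedly complete in $Z^V$. Extract a block sequence $(u_k)\subset c_{00}(\oplus E_i)$ with $\inf_k\|u_k\|_{Z^V}\geq\delta>0$ and $\sup_K\|\sum_{k\leq K}u_k\|_{Z^V}<\infty$. Applying the lower block estimate to the normalized blocks $u_k/\|u_k\|_{Z^V}$ with coefficients $\|u_k\|_{Z^V}$ gives
\[
\Bigl\|\sum_{k\leq K}\|u_k\|_{Z^V}\,v_{\min\text{supp}_E(u_k)}\Bigr\|_V\;\leq\;2C\,\Bigl\|\sum_{k\leq K}u_k\Bigr\|_{Z^V},
\]
so $\sum_k\|u_k\|_{Z^V}v_{\min\text{supp}_E(u_k)}$ has uniformly bounded partial sums in $V$ and must converge by boundedly completeness of $(v_i)$, forcing $\|u_k\|_{Z^V}\to 0$, a contradiction.

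For the shrinking assertion, assume $V$ is shrinking and $(E_i)$ is shrinking in $Z$. The cleanest route is dual: since $V$ shrinking means $V^*=(v_i^*)$ is $1$-unconditional and boundedly complete, and since $(E_i)$ shrinking in $Z$ means $(E_i^*)$ is a boundedly complete FDD for $Z^*$, one shows that $(Z^V(E))^{(*)}$ is isometrically isomorphic to $(Z^*)^{V^*}(E^*)$ via a direct dual-norm computation matching the supremum-over-partitions expression in the primal against a corresponding infimum in the dual. The boundedly complete assertion proved above, applied to $Z^*$ and $V^*$, then gives that $(E_i^*)$ is boundedly complete in $(Z^V)^{(*)}$, and the standard correspondence between shrinking FDDs and boundedly complete predual FDDs yields that $(E_i)$ is shrinking in $Z^V$. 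The main obstacle is this dual-norm identification, which is a Hahn--Banach/minimax computation taking full advantage of the $1$-unconditionality of $V$.
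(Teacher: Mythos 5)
The paper does not prove Proposition~\ref{P2}; it quotes it from~\cite{OSZ2}, so there is no in-paper argument to compare against. Judged on its own merits, your treatment of the boundedly complete assertion is correct (the standard extraction of a non-Cauchy block sequence plus the lower block estimate and bounded completeness of $(v_i)$), but the other two parts have genuine gaps.

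For the lower block estimate, your claim that one can choose a partition $m_i=p_0^{(i)}<\dots<p_{k_i}^{(i)}\leq m_{i+1}$ with $\|w_i\|_V\geq 1-\eta$ is unjustified and false in general. The supremum defining $\|z_i\|_{Z^V}$ runs over partitions whose first index $n_0$ may lie strictly below $m_i=\min\text{supp}_E(z_i)$, and replacing $n_0$ by $m_i$ moves the first coefficient from $v_{n_0}$ to $v_{m_i}$; for a basis that is $C$-block-stable but not $(1+\eta)$-block-stable this can cost a genuine constant. What is true, and what produces the $2$ in $2C$, is a triangle-inequality split: a near-optimal partition gives a vector $c_1 v_{n_0}+R$ with $\|c_1 v_{n_0}+R\|_V$ close to $1$ and $R$ supported in $[n_1,\infty)$, $n_1>m_i$. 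Since $1\leq c_1+\|R\|_V$, either $c_1\geq\tfrac12$ (take the two-point partition $m_i<n_1$, giving $\|w_i\|_V=c_1$) or $\|R\|_V\geq\tfrac12$ (take $m_i<n_1<\dots<n_k$ and drop the first coordinate using $1$-unconditionality). Thus $\|w_i\|_V\geq\tfrac12$, and block stability applied to $(w_i/\|w_i\|_V)$ versus $(v_{m_i})$ then gives exactly the $2C$. Your attribution of the factor~$2$ to the projection constant of $(E_i)$ in $Z$ is a red herring: the projection constant of $(E_i)$ in $Z$ does not appear anywhere in this estimate.

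For the shrinking assertion, the proposed identification $(Z^V(E))^{(*)}\cong(Z^{(*)})^{V^*}(E^*)$ is false, even up to isomorphism, so the route collapses. Take $Z=c_0$ with the one-dimensional FDD $(E_i)$ given by its unit vector basis and $V=\ell_2$. For any partition one has $\sum_j\bigl(\max_{i\in I_j}|z_i|\bigr)^2\leq\sum_i|z_i|^2$, with equality on singletons, so $Z^V=\ell_2$ and hence $(Z^V)^{(*)}=\ell_2$. On the other side $Z^{(*)}=\ell_1$, $V^*=\ell_2$, and for any partition $\sum_j\bigl(\sum_{i\in I_j}|w_i|\bigr)^2\leq\bigl(\sum_i|w_i|\bigr)^2$, with equality on the trivial partition, so $(Z^{(*)})^{V^*}(E^*)=\ell_1$. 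Thus $(Z^V)^{(*)}=\ell_2\neq\ell_1=(Z^{(*)})^{V^*}(E^*)$. The conclusion of Proposition~\ref{P2} is of course still true here ($\ell_2$ has a shrinking basis), but it is not reachable by this duality; the shrinking assertion needs a direct argument that bounded block sequences of $(E_i)$ in $Z^V$ are weakly null, using both the shrinking of $(E_i)$ in $Z$ and the shrinking of $(v_i)$ in a way that does not reduce to a dual-norm formula.
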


In proving our main theorem we will show that if $X$ satisfies
subsequential $V$-upper tree estimates then it is isomorphic to a
subspace of some $Z^V(E)$ and is isomorphic to a quotient of some
$Z^V(F)$.

\section{Proofs of the main results}\label{S3}

\begin{proof}[Proof of Theorem \ref{T1}]

\noindent$1)\Rightarrow 4)$ $(v_i)$ is $D$-right-dominant for some
$D\geq1$, from which we can easily deduce that $(v_i^*)$ is $D$-left-dominant.  By
\cite[Corollary 8]{DFJP} there exists a space $Z$ with a shrinking
and bimonotone FDD $(E_i)$ for which there is a quotient map
$Q:Z\rightarrow X$. The map $Q^*:X^*\rightarrow Z^*$ is an into
isomorphism. After renorming $X$ if necessary, we can assume that $X$
has the quotient norm induced by $Q$, and so $Q^*$ is an isometric embedding. By Lemma \ref{L3}
 we have that $X^*$ satisfies subsequential C-$V^*$ lower $w^*$ tree
 estimates for some $C\geq1$.  As $Q^*X^*\subset Z^*$ is $w^*$ closed, we may apply Proposition
 \ref{P1} with $A=\{(n_i,x_i)_{i=1}^\infty\in(\N\times S_{Q^*X^*})^\omega \,|  (x_i) \textrm{ C-dominates  }
(v_{n_i})\}$ and $\vp>0$ such that
$\overline{A_\vp}\subset\{(n_i,x_i)_{i=1}^\infty\in(\N\times S_{Q^*X^*})^\omega \,|  (x_i) \textrm{
2CD-dominates } (v_{n_i})\}$.  This gives sequences
$(K_i)\in[\N]^\omega$ and $\bar{\delta}=(\delta_i)\subset (0,1)$ and
a
 blocking $(F_i)$ of $(E^*_i)$ such that if $(x_i)\subset S_{Q^*X^*}$ and
  $||x_i-P^F_{(r_{i-1},r_i)}(x_i)||<2\delta_i$
 for some $(r_i)\in[\N]^\omega$ then $(K_{r_{i-1}},x_i)\in \overline{A_\vp}$. Hence, the sequence $(x_i)$ $2CD$-dominates
 $(v_{K_{r_{i-1}}})$.

We choose a blocking $G=(G_i)$ of $(F_i)$ defined by
$G_i=\sum_{j=m_{i-1}+1}^{m_i}F_j$ for some $(m_i)\in[\N]^\omega$ such
that there exists $(e_n)\subset S_{Q^*X^*}$ with $||e_n-P_n^G
(e_n)||<\delta_n/2$ for all $n\in\N$.

In order to continue we need a result from \cite{OS1}
 which is based on an argument due to W.~B.~Johnson \cite{J}.
\cite[Corollary 4.4]{OS1} was stated for reflexive spaces. Here we
state it for $w^*$-closed subspaces of dual spaces with a
boundedly complete FDD: the proof is easily seen to work in this
situation. Also note that conditions (d) and (e) which were not stated
in~\cite{OS1} follow easily from the proof.

\begin{prop}\label{P3}{\rm \cite[Lemma 4.3 and Corollary 4.4]{OS1}}
Let $Y$ be a $w^*$ closed subspace of a dual Banach space $Z$
with a boundedly complete FDD $A=(A_i)$ having projection constant
$K$.  Let $\bar{\eta}=(\eta_i)\subset(0,1)$ with
$\eta_i\downarrow0$. Then there exists
$(N_i)_{i=1}^\infty\in[\N]^\omega$ such that the following holds.
Given $(k_i)_{i=0}^\infty\in[\N]^\omega$ and $x\in S_Y$, there
exists $x_i\in Y$ and $t_i\in(N_{k_{i-1}-1},N_{k_{i-1}})$ for all
$i\in\N$ with $N_0=0$ and $t_0=0$ such that
\begin{enumerate}
\item[(a)] $x=\sum_{i=1}^\infty x_i, \textrm{and for all
}i\in\N\textrm{ we have}$\\
\item[(b)] $\textrm{either }||x_i||<\eta_i\textrm{ or
}||x_i-P^A_{(t_{i-1},t_i)}x_i||<\eta_i||x_i||$,\\
\item[(c)]$||x_i-P^A_{(t_{i-1},t_i)}x||<\eta_i$,\\
\item[(d)]$||x_i||<K+1$,\\
\item[(e)]$||P_{t_i}^A x||<\eta_i$.\\
\end{enumerate}
\end{prop}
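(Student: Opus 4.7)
The plan is to follow the strategy of the reflexive case treated in \cite[Lemma~4.3 and Corollary~4.4]{OS1}, replacing weak compactness of $B_Y$ with $w^*$-compactness. Since $(A_i)$ is boundedly complete, $Z$ is a dual space whose predual $W$ carries a shrinking FDD biorthogonal to $(A_i)$; consequently each $P^A_t$ is $w^*$-$w^*$-continuous (as the adjoint of the corresponding projection in $W$), and $B_Y$ is $w^*$-compact since $Y$ is $w^*$-closed. The heart of the argument is a uniform skipped-coordinate lemma: for every $\eta>0$ and every $N\in\N$ there exists $M>N$ such that every $x\in B_Y$ admits some $t\in(N,M)$ with $\|P^A_t x\|<\eta$. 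I would prove this by contradiction: if it fails, the sets $D_M=\{x\in B_Y:\|P^A_t x\|\geq\eta\text{ for all }t\in(N,M]\}$ are non-empty, nested, and $w^*$-closed (norm being $w^*$-lower semicontinuous and $P^A_t$ being $w^*$-$w^*$-continuous). By $w^*$-compactness, $\bigcap_M D_M$ contains some $y$ with $\|P^A_t y\|\geq\eta$ for all $t>N$, contradicting $\|P^A_t y\|\to 0$ (which holds since $y=\sum_i P^A_i y$ converges in norm).

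From this lemma I would inductively construct $N_0=0<N_1<N_2<\cdots$ so that each interval $(N_{n-1},N_n)$ contains, for every $x\in B_Y$, some coordinate $t$ with $\|P^A_t x\|<\eta_n'$, where $\eta_n'\ll\eta_n$ is a tolerance chosen small enough to absorb the later approximation errors. Given $(k_i)\in[\N]^\omega$ and $x\in S_Y$, for each $i$ I pick $t_i\in(N_{k_{i-1}-1},N_{k_{i-1}})$ with $\|P^A_{t_i}x\|<\eta_{k_{i-1}}'\leq\eta_i$; here I use that $(k_i)$ is strictly increasing with $k_0\geq 1$, so $k_{i-1}\geq i$ and hence $\eta_{k_{i-1}}\leq\eta_i$. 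This establishes (e).

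To produce $x_i\in Y$, set $y_n=P^A_{[1,t_n]}x\in Z$; the Schauder decomposition forces $y_n\to x$ in norm, so $d(y_n,Y)\leq\|x-y_n\|\to 0$ and we may pick $\tilde y_n\in Y$ with $\|\tilde y_n-y_n\|<\sigma_n$ for $\sigma_n\downarrow 0$ chosen rapidly. Setting $\tilde y_0=0$ and $x_i=\tilde y_i-\tilde y_{i-1}\in Y$ gives the telescoping identity $\sum_{i=1}^n x_i=\tilde y_n\to x$ (establishing (a)) together with $\|x_i-P^A_{(t_{i-1},t_i]}x\|\leq\sigma_i+\sigma_{i-1}$. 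Decomposing $P^A_{(t_{i-1},t_i]}=P^A_{(t_{i-1},t_i)}+P^A_{t_i}$ and combining with (e) yields (c); the bound (d) follows from $\|x_i\|\leq\|P^A_{(t_{i-1},t_i]}x\|+\sigma_i+\sigma_{i-1}$ and the projection constant; the block-like estimate (b) follows similarly from expanding $x_i-P^A_{(t_{i-1},t_i)}x_i$, using smallness of $\sigma_n,\eta_n'$ relative to $\eta_n$, with the ``either $\|x_i\|<\eta_i$'' branch of (b) handling the degenerate case.

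The main obstacle is the uniform skipped-coordinate lemma of the first paragraph. This is the only step where $w^*$-compactness of $B_Y$ and $w^*$-$w^*$-continuity of $P^A_t$ are used essentially---the reflexive case of \cite{OS1} uses weak compactness of $B_Y$ and weak continuity of $P^A_t$ in the analogous argument. Once that lemma is in hand, the remainder is bookkeeping, requiring only that the tolerances $\eta_n'$ and $\sigma_n$ be taken sufficiently small (e.g.\ on the order of $\eta_n^2/K^2$) so that (b)--(e) hold simultaneously.
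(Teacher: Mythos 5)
Your strategy---using $w^*$-compactness of $B_Y$ (valid since $Y$ is $w^*$-closed) and the $w^*$-to-$w^*$ continuity of the $P^A_t$ (valid since $(A_i)$ is boundedly complete, hence biorthogonal to a shrinking FDD of the predual) in place of the weak compactness used in \cite{OS1}---is the right one and is exactly the adaptation the paper indicates. But the lemma you isolate as the ``heart of the argument'' is too weak, and the real work is buried in the paragraph you dismiss as bookkeeping.

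The gap is the approximation step. You set $y_n=P^A_{[1,t_n]}x$, observe $d(y_n,Y)\leq\|x-y_n\|\to 0$, and pick $\tilde y_n\in Y$ with $\|\tilde y_n-y_n\|<\sigma_n$ for some $\sigma_n\downarrow 0$ ``chosen rapidly''. For conclusions (b)--(d) to hold with the pre-assigned $\eta_i$, the $\sigma_n$ must be fixed in advance, independently of $x$ and $(k_i)$, and dominated by the $\eta_n$. But there is no such uniform bound: $\|x-y_n\|=\|P^A_{(t_n,\infty)}x\|$ is only $\leq K+1$ in general, and the rate at which it tends to $0$ depends on the tail of $x$. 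The remedy is to strengthen your compactness lemma: for every $\eta>0$ and $N\in\N$ there is $M>N$ such that every $x\in B_Y$ admits $t\in(N,M)$ with \emph{both} $\|P^A_t x\|<\eta$ \emph{and} $d(P^A_{[1,t]}x,Y)<\eta$. The same nested-$w^*$-compacta argument proves this strengthened version, because $z\mapsto d(P^A_{[1,t]}z,Y)$ is also $w^*$-lower-semicontinuous: since $Y$ is $w^*$-closed, $Z/Y$ is isometrically the dual of the preannihilator $Y_\perp\subset Z^{(*)}$, the quotient map $Z\to Z/Y$ is the adjoint of the inclusion $Y_\perp\hookrightarrow Z^{(*)}$ and hence $w^*$-to-$w^*$ continuous, and dual norms are $w^*$-l.s.c. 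A point $y$ in the intersection of the (now appropriately redefined) sets $D_M$ would then satisfy, for every $t>N$, $\|P^A_t y\|\geq\eta$ or $d(P^A_{[1,t]}y,Y)\geq\eta$, both of which are contradicted by the norm convergence $P^A_{[1,t]}y\to y\in Y$. With this strengthening in hand your third paragraph goes through with $\sigma_n$ chosen in advance as you intended.
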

We apply Proposition \ref{P3} with $Y=Q^*X^*$, $A=G$ and
$\bar{\eta}=\bar{\delta}$ which gives a sequence
$(N_i)\in[\N]^\omega$.  We set
$H_j=\bigoplus_{i=N_{j-1}+1}^{N_j}G_i$ for each $j\in\N$. To make
notation easier we let $V^*_M=(v_{M_i}^*)$ be the subsequence of
$(v^*_i)$ defined by $M_i=K_{m_{N_i}}$.

Fix $x\in S_{Q*X^*}$ and a sequence
$(n_i)_{i=0}^\infty\in[\N]^\omega$, the proof in \cite[Theorem 4.1(a)]{OSZ2}  shows
$$\Big\Vert\sum_{i=1}^\infty\Vert P^H_{[n_{i-1},n_i)}(x)\Vert_{Z^*}\cdot
v^*_{M_{n_{i-1}}}\Big\Vert_{V^*}\leq4D^2 C(1+2\Delta+2)+2+3\Delta.
$$
where $\Delta=\sum_{i=1}^\infty\delta_i$. Thus the norms
$||\cdot||_{Z^*}$ and $||\cdot||_{(Z^*)^{V^*_M}}$ are equivalent on
$Q^*X^*$.  As the norm on each $H_j$ is unchanged, a coordinate-wise
null sequence in $Q^*X^*\subset Z^*$ will still be coordinate-wise
null in $(Z^*)^{V^*_M}$.  Hence the map $Q^*:X^*\rightarrow
(Z^*)^{V^*_M}$ is still $w^*$ to $w^*$ continuous.

We have that $(Z^*)^{V^*_M}$ has a boundedly complete FDD $(H_j)$ which satisfies
subsequential $V^*_M$ lower block estimates by Proposition \ref{P2}.
We can now fill in the FDD as in~\cite[Lemma 2.13]{OSZ2}.  We let $B_{M_j}=H_j$ for all $j\in\N$
and we let $B_j=\R$ for each $j\not\in (M_i)$.  For
$x=(x_j)\in\coo(B_j)$ we define
$$\|x\|=\Big\|\sum_{j\in\N}x_{M_j}\Big\|_{(Z^*)^{V^*_M}}+\sum_{j\not\in
M}|x_j|.$$
We let $Y$ be the completion of $c_{00}(\oplus B_j)$ under this norm. $Y$ is
 clearly isometrically isomorphic to $(Z^*)^{V^*_M}\oplus \ell_1$ or
 $(Z^*)^{V^*_M}\oplus
 \ell^n_1$ for some $n\in\N_0$.
Thus the natural embedding of $(Z^*)^{V^*_M}$ into $Y$ is $w^*$ to
$w^*$ continuous.  Hence there is a $w^*$ to $w^*$ continuous
embedding of $X^*$ into $Y$. Finally, from
the fact that $(H_j)$ satisfies subsequential $V^*_M$ lower block estimates in
 $(Z^*)^{V^*_M}$ it is not hard to deduce that $(B_j)$ satisfies subsequential $V^*$
lower block estimates in $Y$.

 $4)\Rightarrow 3)$ This is clear
because if $(F_i^*)$ is a boundedly complete FDD of $Z^*$ then
   $(F_i)$ is a shrinking FDD of $Z$ and a $w^*-w^*$
continuous embedding $T:X^*\rightarrow Z^*$ must be the dual of
some quotient map $Q:Z\rightarrow X$.  Also, $(F_i^*)$  having
subsequential $V^*$ lower block estimates is equivalent to $(F_i)$
having subsequential $V$ upper block estimates due to Proposition
\ref{P0}.

$3)\Rightarrow 1)$ Let $(F_i)$ be a bimonotone shrinking FDD which
satisfies subsequential $V$ upper block estimates, and $Q:Z\rightarrow
X$ be a quotient map.  There exists $C>0$ such that $B_X\subset
Q(CB_Z)$. We will need the following lemma.

\begin{lem}\label{L1} Let $X$ and $Z$ be Banach spaces, $F=(F_i)$ be a bimonotone FDD for $Z$, and $Q:Z\rightarrow X$ be a quotient map. If $(x_i)\subset
S_X$ is weakly null and $Q(CB_Z)\supseteq B_X$ for some $C>0$ then
for all $\vp>0$ and $n\in\N$ there exists $N\in\N$ and $z\in 2CB_Z$
such that $P_{[1,n]} z=0$ and $|| Qz-x_N||<\vp$.
\end{lem}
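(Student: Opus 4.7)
The plan is to lift each $x_N$ to $y_N \in CB_Z$ with $Qy_N = x_N$ (possible by $B_X \subseteq Q(CB_Z)$), split $y_N = a_N + b_N$ where $a_N = P_{[1,n]}y_N$ and $b_N = P_{(n,\infty)}y_N$ (both of norm at most $C$ by bimonotonicity), and then manufacture a correction inside $P_{(n,\infty)}Z$ that cancels the unwanted ``head'' $a_N$ at the level of $Q$.

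Since $P_{[1,n]}Z$ is finite-dimensional, I can pass to a subsequence so that $a_N \to a$ in norm, hence $Qa_N \to Qa$ in norm as well. Weak nullity of $(x_N)$ then gives $Qb_N = x_N - Qa_N \to -Qa$ weakly in $X$. By Mazur's theorem, for any $\eta > 0$ there is a finite convex combination $\sum_j \lambda_j Qb_{M_j}$ (with the indices $M_j$ arbitrarily large) satisfying $\|\sum_j \lambda_j Qb_{M_j} + Qa\| < \eta$. Setting $c = -\sum_j \lambda_j b_{M_j}$, one obtains $c \in P_{(n,\infty)}Z$ with $\|c\| \leq C$, $P_{[1,n]}c = 0$, and $\|Qc - Qa\| < \eta$.

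To finish, I would pick $N$ large enough that $\|Qa_N - Qa\| < \eta$ and set $z = b_N + c$. Then $P_{[1,n]}z = 0$, $\|z\| \leq \|b_N\| + \|c\| \leq 2C$, and
\[
\|Qz - x_N\| \;=\; \|(Qb_N + Qc) - x_N\| \;=\; \|Qc - Qa_N\| \;\leq\; \|Qc - Qa\| + \|Qa - Qa_N\| \;<\; 2\eta.
\]
Taking $\eta = \vp/2$ gives the conclusion. The main subtlety I anticipate is keeping the norm of the correction $c$ at most $C$; this is precisely what Mazur and bimonotonicity handle together, since each $b_{M_j}$ lies in $CB_{P_{(n,\infty)}Z}$ and convex combinations stay inside the same ball. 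Notice that no shrinking hypothesis on $(F_i)$ is used.
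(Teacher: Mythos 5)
Your proof is correct and follows essentially the same strategy as the paper's: lift each $x_i$ using $Q(CB_Z)\supseteq B_X$, exploit compactness of the finite-dimensional block $P_{[1,n]}Z$ to stabilize the heads of the lifts along a subsequence, and then use weak nullity of $(x_i)$ via Mazur's theorem to cancel the head at the level of $Q$, producing a $z$ of the form (one tail) minus (a convex combination of other tails), whose norm is at most $2C$ by the triangle inequality. Your head-plus-tail split is marginally cleaner than the paper's device of perturbing the lifts $z_{k_i}$ to share a common head $z_0$, since it keeps every intermediate vector exactly inside $CB_Z$ with image exactly $x_i$, whereas the paper's perturbation technically nudges the lifts slightly outside $CB_Z$ and off the fibers of $Q$, requiring the $\vp/3$ bookkeeping.
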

\begin{proof}
Let $z_i\in C B_Z$ such that $Q z_i=x_i$.  After passing to a
subsequence $(z_{k_i})$ and perturbing we may assume instead that
$P_{[1,n]}^F {z_{k_i}}=z_0$ for some $z_0\in C B_Z$, and that $||Q
z_{k_i} -x_{k_i}||<\vp/3$.  As $(x_{k_i})$ is weakly null, 0 must be
in the closure of the convex hull of $(x_{k_i})$. Hence there is
some finite sequence $(a_i)_{i=2}^m\subset[0,1]$ such that
$||\sum_{i=2}^m a_i x_{k_i}||<\vp/3$ and $\sum_{i=2}^m a_i=1$.  Let
$z=z_{k_1}-\sum_{i=2}^m a_i z_{k_i}$. Then $z\in 2C B_Z$, $P_{[1,n]}^F
z=0$, and
$$||Qz-x_{k_1}||=||Qz_{k_1}-x_{k_1}-\sum_{i=2}^m a_i Qz_{k_i}||\leq||Qz_{k_1}-x_{k_1}||+\sum_{i=2}^m a_i || Q
z_{k_i}-x_{k_i}||+||\sum_{i=2}^m a_i x_{k_i}||< \vp.
$$
\end{proof}

\noindent{\em Continuation of proof of Theorem \ref{T1}.}
Let $(x_t)_{t\in T^{\textrm{even}}_\infty}\subset S_X$ be a weakly null even
tree in $X$, and let $\eta\in(0,1)$. By Lemma \ref{L1} we may pass to
a full subtree $(x'_t)_{t\in T^{\textrm{even}}_\infty}$ of $(x_t)$ so that
there exists a block tree $(z_t)_{t\in T^{\textrm{even}}_\infty}\subset 2C
B_Z$ such that $||Q(z_t)-x'_t||<\eta2^{-\ell}$ for all
$\ell\in\N$ and $t=(k_1,\dots,k_{2\ell})\in T^{\textrm{even}}_\infty$.
Now choose $1=k_1<k_2<\dots$ such that $\max
\text{supp}(z_{(k_1,...,k_{2i})})<k_{2i+1}<
\min \text{supp}(z_{(k_1,...,k_{2i+2})})$ for all $i\in\N$.  Then
$(z_{(k_1,...,k_{2i})})$ is dominated by $(v_{k_{2i-1}})$, and hence
$(x'_{(k_1,...,k_{2i})})$ is dominated by $(v_{k_{2i-1}})$ provided $\eta$
was chosen sufficiently small. Finally, the branch
$(k_{2i-1},x'_{(k_1,\dots,k_{2i})})$ corresponds to a branch
$(\ell_{2i-1},x_{(\ell_1,\dots,\ell_{2i})})$ in the original tree with
$k_i\leq \ell_i$ for all $i\in\N$. Since $(v_i)$ is right-dominant, it
follows that $(x_{(\ell_1,...,\ell_{2i})})$ is dominated by
$(v_{\ell_{2i-1}})$. Thus $X$ satisfies subsequential $V$ upper tree
estimates.

$2)\Rightarrow 1)$ We assume that $X$ is a quotient of a space $Z$
with separable dual such that $Z$ satisfies subsequential $V$ upper
tree estimates.  By the implication $1)\Rightarrow3)$ applied to $Z$,
$Z$ is the quotient of a space $Y$ with a shrinking FDD satisfying
subsequential $V$ upper block estimates.  $X$ is then also a quotient
of $Y$, so by the implication $3)\Rightarrow1)$ we have that $X$
satisfies subsequential $V$ upper tree estimates.

$1)\Rightarrow 5)$ Our proof will be based on the proof of~\cite[Theorem
4.1(b)]{OSZ2}. By Zippins theorem we may assume, after renorming
$X$ if necessary, that there exists a Banach space $Z$ with a
shrinking, bimonotone FDD $(F_j)$ and an isometric embedding
$i:X\hookrightarrow Z$. Also, by~\cite[Corollary 8]{DFJP} we know that
there exists a Banach space $W$ with a shrinking FDD $(E_j)$ and a
quotient map $Q:W\twoheadrightarrow X$. Thus we have a quotient map
$i^*:[F^*_j]=Z^*\twoheadrightarrow X^*$ and an embedding
$Q^*:X^*\hookrightarrow [E^*_j]=W^*$. We can assume, after renorming
$W$ if necessary, that $Q^*$ is an isometric embedding. Note that
$(F^*_j)$ and $(E^*_j)$ are boundedly complete FDDs of $Z^*$ and
$W^*$, respectively, and that $X^*$ has the quotient norm induced by
$i^*$. Let $K$ be the projection constant of $(E_j)$ in $W$.

By Lemma~\ref{L3}, $X^*$ satisfies subsequential $C$-$V^*$ lower
$w^*$ tree estimates for some $C\geq 1$. Choose $D\geq 1$ such that
$(v_i)$ is $D$ right dominant.
Since $Q^*X^*$ is $w^*$ closed in $W^*$, we can apply
Proposition~\ref{P1} as in the proof of the implication $1)\Rightarrow
4)$: after blocking $(E^*_j)$, we find sequences
$(K_i)\in[\N]^\omega$ and $\bar{\delta}=(\delta_i)\subset (0,1)$ with
$\delta_i\downarrow 0$ such that if $(x_i)\subset S_{Q^*X^*}$ is a
$2K\bar{\delta}$-skipped block of $(E^*_j)$ with
$||x_i-P^{E^*}_{(r_{i-1},r_i)}x_i||<2K\delta_i$ for all $i\in\N$,
where $1\leq r_0< r_1< r_2<\dots$, then $(v^*_{K_{r_{i-1}}})$ is
$2CD$-dominated by $(x_i)$, and moreover, using standard perturbation
arguments and making $\bar{\delta}$ smaller if necessary, we can
assume that if $(w_i)\subset W^*$ satisfies $||x_i- w_i||< \delta_i$
for all $i\in\N$, then $(w_i)$ is a basic sequence equivalent to
$(x_i)$ with projection constant at most $2K$. We can also assume that
$\Delta=\sum_{i=1}^{\infty}\delta_i<\frac17$.

Choose a sequence $(\vp_i)\subset (0,1)$ with $\vp_i\downarrow 0$ and
$3K(K+1)\sum_{j=i}^{\infty}\vp_j<\delta_i^2$ for all $i\in\N$. After
blocking $(E^*_j)$ if necessary, we can assume that for any subsequent
blocking $D$ of $E^*$ there is a sequence $(e_i)$ in $S_{Q^*X^*}$ such
that $||e_i-P^D_i(e_i)||<\vp_i/2K$ for all $i\in\N$.

Using Johnson and Zippin's blocking lemma \cite{JZ1} we may
assume, after further blocking our FDDs $(F^*_j)$ and $(E^*_j)$ if
necessary, that given $k<\ell$, if
$z^*\in\bigoplus_{j\in(k,\ell)}F_j^*$, then $\Vert
P^{E^*}_{[1,k)}Q^*i^*z^*\Vert<\vp_k$ and $\Vert
P^{E^*}_{[\ell,\infty)}Q^*i^*z^*\Vert<\vp_\ell$, and moreover the same
holds if one passes to any blocking of $(F^*_j)$ and the corresponding
blocking of $(E^*_j)$. Note that although the conditions of the
Johnson-Zippin lemma are not satisfied here, the proof is easily seen
to apply because our FDDs are boundedly complete, and the map $Q^*i^*$
is $w^*$ to $w^*$ continuous.

We now continue as in the proof of~\cite[Theorem 4.1(b)]{OSZ2}: we
replace $F^*_j$ by the quotient space $\tilde{F}_j=i^*(F^*_j)$, we let
$\tilde{Z}$ be the completion of $\coo(\tilde{F}_j)$ w.r.t.~the norm
$|||\cdot|||$ as defined in~\cite{OSZ2} and obtain a quotient map
$\tilde{\iota}\colon \tilde{Z}\to X^*$. We note that the results
corresponding to~\cite[Proposition 4.9(b),(c)]{OSZ2} are valid here as
their proof does not require reflexivity (part (a) is not required,
and indeed neither valid, here).

Finally, we find a blocking $(\tilde{G}_j)$
of $(\tilde{F}_j)$ and a subsequence $V^*_N=(v^*_{n_i})$ of $(v^*_i)$
such that $\tilde{\iota}$ is still a quotient map of
$\tilde{Z}^{V^*_N}(\tilde{G})$ onto $X^*$ and it is still $w^*$ to
$w^*$ continuous (note that $(\tilde{G}_j)$ is boundedly complete in
$\tilde{Z}^{V^*_N}(\tilde{G})$ by Proposition~\ref{P2}). Since
$(\tilde{G}_j)$ satisfies subsequential $V^*_N$ lower block estimates
in $\tilde{Z}^{V^*_N}(\tilde{G})$, statement $(5)$ will then follow by
duality (after filling the FDD as in the proof of the implication
$1)\Rightarrow 4)$). To find suitable $\tilde{G}$ and $(n_i)$ we now
follow the proof of~\cite[Theorem 4.1(b)]{OSZ2} verbatim. The only
comment we need to make is that~\cite[Lemma 4.10]{OSZ2} is valid
since we are working with boundedly complete FDDs and $w^*$ to $w^*$
continuous maps.

Finally, since the missing implications (5)$\Rightarrow$(1) and
(3)$\Rightarrow$(2) are trivial, we finished the proof of the
theorem.
\end{proof}
The proof  of the following result is an adaptation of the proof
Theorem~5.1 in \cite{OSZ2} to the non reflexive case.

\begin{cor}\label{C1}
Let $V=(v_i)$ be a 1-unconditional,  shrinking, block stable, and
right dominant
normalized basic sequence. There is a Banach space $Y$ with a shrinking FDD
$(E_i)$ satisfying subsequential $V$ upper  block estimates such that if a
Banach space $X$ with separable dual has subsequential $V$ upper tree
estimates, then $X$ embeds into $Y$.
\end{cor}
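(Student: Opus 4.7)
The plan is to mirror the proof of \cite[Theorem~5.1]{OSZ2}, using the implication $1)\Rightarrow 5)$ of Theorem~\ref{T1} as a non-reflexive substitute for its reflexive counterpart. First I would reduce the statement to a universality result for spaces with shrinking FDDs: by Theorem~\ref{T1}, any space $X$ with separable dual satisfying subsequential $V$ upper tree estimates already embeds into some space $Z_X$ with a shrinking FDD satisfying subsequential $V$ upper block estimates. Hence it is enough to produce a single Banach space $Y$ with such an FDD into which every such $Z_X$ embeds isomorphically.

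For the construction of $Y$, I would follow the Pelczy\'nski--type argument of \cite[Theorem~5.1]{OSZ2}. Fix a sequence $(A_n)$ of finite-dimensional Banach spaces, each carrying a fixed Auerbach basis, that is dense in Banach--Mazur distance in the class of all finite-dimensional Banach spaces. Arrange the $(A_n)$ into an FDD $(G_i)$ so that each $A_n$ appears with infinite multiplicity in every tail of $(G_i)$, and let $Y$ be the completion of $c_{00}(\bigoplus_i G_i)$ under a norm (as in \cite{OSZ2}, obtainable for instance by applying the $V^*$-lower construction of Proposition~\ref{P2} to the dual FDD and then passing to the predual) that makes $(G_i)$ satisfy subsequential $V$ upper block estimates. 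Since $V$ is shrinking and hence weakly null, the remark preceding Lemma~\ref{L1} ensures that $(G_i)$ is then automatically a shrinking FDD of $Y$.

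The third step is the embedding. Given any $Z$ with shrinking FDD $(F_j)$ satisfying subsequential $C$-$V$ upper block estimates, I would choose a fast-growing sequence $i_1<i_2<\cdots$, and for each $j$ pick an $A_{n_j}$ very close to $F_j$ in Banach--Mazur distance together with a near-isometry $T_j\colon F_j\to G_{i_j}=A_{n_j}$. Using the block stability of $V$, the right dominance of $V$, and the subsequential $V$ upper block estimates on both $(F_j)$ and $(G_i)$, one shows that the induced map $T=\bigoplus_j T_j\colon Z\to Y$ is bounded and bounded below, provided the gaps between consecutive $i_j$ are large enough to absorb the block-stability and right-dominance constants.

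The main obstacle, compared with the reflexive case treated in \cite{OSZ2}, is that neither $Z_X$ nor $Y$ can be assumed reflexive, so arguments relying on weak compactness of the unit ball are no longer available. However, all of the non-reflexive difficulty has already been absorbed into Theorem~\ref{T1}, through the $w^*$-tree techniques of Lemma~\ref{L3} and Proposition~\ref{P1}. Once Theorem~\ref{T1} is available, the construction of $Y$ and the embedding of $Z_X$ into $Y$ take place entirely on the primal side, so the Pelczy\'nski-type argument from \cite[Theorem~5.1]{OSZ2} transfers with only cosmetic changes.
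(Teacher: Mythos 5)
Your overall plan matches the paper's: reduce via the implication $1)\Rightarrow 5)$ of Theorem~\ref{T1} to the case of spaces with a shrinking FDD satisfying subsequential $V$ upper block estimates, then build $Y$ from a universal FDD space by renorming its dual with the $V^*$-lower construction of Proposition~\ref{P2} and passing to the predual. The construction of $Y$ is essentially the paper's (the paper takes $Y=\bigl((W^{(*)})^{V^*}\bigr)^{(*)}$ with $W$ Schechtman's universal FDD space), and the deduction that $(E_i)$ is shrinking and satisfies subsequential $V$ upper block estimates in $Y$ also goes through as you indicate, using Proposition~\ref{P0} and Proposition~\ref{P2}.

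The gap is in your third step, the embedding. First, per-coordinate near-isometries $T_j\colon F_j\to G_{i_j}$ do \emph{not} by themselves yield a bounded, bounded-below map $\bigoplus_j T_j\colon Z\to Y$, nor into the underlying universal space: what one needs is Schechtman's theorem \cite{S} in its full strength, namely that for any $\vp>0$ there is a $(1+\vp)$-embedding $T\colon Z\to W$ with $T(F_j)=E_{k_j}$ and with $\sum_j P^E_{k_j}$ a norm-one projection, so that $\|\cdot\|_W$ restricted to the image is $(1+\vp)$-equivalent to $\|\cdot\|_Z$ and the image is complemented. Your fast-growing gaps $i_1<i_2<\cdots$ do not substitute for this, and in fact the paper's proof imposes no growth condition on the $k_i$ at all. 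Second, and more seriously, the norm equivalence of $\|\cdot\|_W$ and $\|\cdot\|_Y$ on the image cannot be established on the primal side using only upper block estimates on $(F_j)$ and $(G_i)$: upper block estimates provide upper bounds on norms of block sequences in both $Z$ and $Y$, which is not enough to show that $T$ is bounded below. The paper instead works on the dual side, where the $Y^*$-norm is explicit: it shows $\|w^*\|_{W^{(*)}}\le\|w^*\|_{Y^*}\le C^3\|w^*\|_{W^{(*)}}$ for $w^*\in c_{00}(\oplus E^*_{k_i})$, using $C$-left dominance and $C$-block stability of $V^*$ together with the fact that $(F^*_j)=(E^*_{k_j})$ satisfies subsequential $C$-$V^*$ \emph{lower} block estimates in $X^*$, which follows by Proposition~\ref{P0} from the $V$ upper block estimates of $(F_j)$ in $X$. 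The desired equivalence $\frac{1}{C^3}\|w\|_W\le\|w\|_Y\le\|w\|_W$ on $c_{00}(\oplus E_{k_i})$ then follows by duality together with the norm-one complementation. Your sketch omits both the lower block estimates on the dual and the complementation step, so as written the argument does not close. Note also that your final paragraph's claim that the embedding argument ``takes place entirely on the primal side'' is inaccurate for the same reason.
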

\begin{proof} By Schechtman's result \cite{S} there exists a space $W$
 with a bimonotone FDD $E=(E_i)$ with the property that any space $X$
 with bimonotone FDD $F=(F_i)$ naturally almost isometrically embeds
 into $W$, i.e. for any $\vp>0$ there is a $(1+\vp)$-embedding
 $T:X\to W$ and a sequence $(k_i)\in [\N]^\omega$, such that
 $T(F_i)=E_{k_i}$, and moreover $\sum_i P^E_{k_i}$ is a norm-1
 projection of $W$.

Since  $V^*$  is boundedly complete it follows from Proposition \ref{P2}  that
 the sequence $(E_i^*)$ is a boundedly complete FDD of the
space $(W^{(*)})^{V^*}$. It follows that $(E_i)$ is a shrinking FDD of the space
$Y=\big((W^{(*)})^{V^*}\big)^{(*)}$ and that $Y^*=(W^{(*)})^{V^*}$.
We denote by $\|\cdot\|_W$,    $\|\cdot\|_{W^{(*)}}$,  $\|\cdot\|_Y$,  $\|\cdot\|_{Y^*}$
the norms in $W$, $W^{(*)}$, $Y$ and $Y^*$, respectively.

By Proposition \ref{P2}  $(E_i^*)$ satisfies subsequential $V^*$
lower block estimates in  $(W^{(*)})^{V^*}$, and, thus,
 by Proposition \ref{P0}  $(E_i)$ satisfies subsequential $V$ upper block estimates  in $Y$ (recall
 that $Y^{(*)}=Y^*=(W^{(*)})^{V^*}$).

We now have to show that a space $X$ with separable dual and with subsequential $V$ upper tree estimates
 embeds in $Y$. By Theorem \ref{T1} we can assume that $X$ has a
 shrinking, bimonotone FDD $(F_i)$
 satisfying  subsequential  $V$ upper block estimates. By our choice of $W$ we can assume that
$X$ is the complemented  subspace of $W$  generated by a subsequence $(E_{k_i})$ of $(E_i)$.
We need to show that on $X$ the norms $\|\cdot\|_W$ and $\|\cdot\|_Y$ are equivalent.

Let $C\ge 1$  be chosen so that $(v_i)$ is  $C$-block stable and $C$-right dominant
  (thus
 $(v^*_i)$ is  $C$-block stable and $C$-left dominant) and such that $(E_{k_i}^*)$  satisfies
subsequential $C$-$V^*$ lower  block estimates in $X^*$.
Let $w^*\in c_{00}(\oplus F_i^*)= c_{00}(\oplus E_{k_i}^*)$. Clearly, we have  $\|w^*\|_{W^{(*)}}\le \| w^*\|_{Y^*}$.
Choose $1\le m_0<m_1<\ldots $ such that
$$\|w^*\|_{Y^*}=\Big\|\sum_{i=1}^\infty \|P^{E^*}_{[m_{i-1},m_i)}(w^*) \|_{W^{(*)}}v^*_{m_{i-1}}\Big\|_{V^*}.$$
W.l.o.g we can assume that $m_0=1$ and that $P^{E^*}_{[m_{i-1},m_i)}(w^*)\not=0$, for $i\in\N$.
Since $w^*\in c_{00}(\oplus E_{k_i}^*)$, we can choose $j_1<j_2<\ldots$ such that
 $k_{j_i}=\min\text{supp} P_{[m_{i-1},m_i)}^{E^*}(w^*)$ and deduce
\begin{align*}
 \|w^*\|_{Y^*}&=\Big\|\sum_{i=1}^\infty
 \|P^{E^*}_{[m_{i-1},m_i)}(w^*) \|_{W^{(*)}}  v^*_{m_{i-1}}\Big\|_{V^*}\\
&\le C\Big\|\sum_{i=1}^\infty \|P^{E^*}_{[m_{i-1},m_i)}(w^*) \|_{W^{(*)}}v^*_{k_{j_i}}\Big\|_{V^*}\\
&\le C^2\Big\|\sum_{i=1}^\infty \|P^{F^*}_{[j_{i},j_{i+1})}(w^*) \|_{W^{(*)}}v^*_{{j_i}}\Big\|_{V^*}
\le C^3 \|w^*\|_{W^{(*)}}.\\
  \end{align*}
This proves that $\|\cdot\|_{W^{(*)}}$ and $\|\cdot\|_{Y^{*}}$ are equivalent on
   $c_{00}(\oplus E^*_{k_i})$. Since $X$ is
1-complemented in $W$,  and $X^{*}$ is 1-complemented in $W^{(*)}$ and since $\sum_i P^{E^*}_{k_i}$ is still
 a  norm-1 projection from $Y^*$ onto
    $\overline{c_{00}(\oplus (E_{k_i}))}^{Y^*}$ it follows for any
    $w\in c_{00}(\oplus E_{k_i})$ that
$\frac1{C^3} \|w\|_W\le \|w\|_Y\le \|w\|_W$, which finishes the proof of our claim.
\end{proof}

As an application of Theorem \ref{T1}  we extend structural and
universality results on classes of bounded Szlenk index from the
reflexive case studied in~\cite{OSZ3} to the non-reflexive case.

\begin{cor}\label{C2}
 Let $\alpha<\omega_1$.  For a space $X$ with separable
dual, the following are equivalent:
\begin{enumerate}
\item[(i)]X has Szlenk index at most $\omega^{\alpha\omega}$,
\item[(ii)]X satisfies subsequential $T_{\alpha,c}$ upper tree estimates for
some $c\in(0,1)$,
\item[(iii)]X embeds into a space $Z$ with an FDD $(E_i)$ which
satisfies subsequential $T_{\alpha,c}$ upper block estimates in $Z$ for
some $c\in(0,1)$.
\end{enumerate}
\end{cor}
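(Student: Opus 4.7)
The plan is to prove the three equivalences by the cyclic chain $(i)\Rightarrow(ii)\Rightarrow(iii)\Rightarrow(i)$. Most of the work has already been done in Theorem~\ref{T1}, so this corollary is essentially a packaging of that theorem together with known facts about Tsirelson spaces and the Szlenk index from \cite{OSZ3}.

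For $(i)\Rightarrow(ii)$ I would simply invoke the result from \cite{OSZ3} recalled in the introduction: for any Banach space $X$ with separable dual and $\Sz(X)\le\omega^{\alpha\omega}$, one already has subsequential $T_{\alpha,c}$ upper tree estimates for some $c\in(0,1)$. This half of the equivalence was therefore available before the present paper.

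For $(ii)\Rightarrow(iii)$, the canonical basis $(t_i)$ of $T_{\alpha,c}$ is normalized, $1$-unconditional, block stable, right dominant and, because $c\in(0,1)$, shrinking. Hence Theorem~\ref{T1} applied with $V=(t_i)$---in particular its implication $1)\Rightarrow 5)$---immediately gives that $X$ is isomorphic to a subspace of some space $Z$ with a shrinking FDD $(E_i)$ satisfying subsequential $T_{\alpha,c}$ upper block estimates.

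For $(iii)\Rightarrow(i)$, I would reduce to the problem of bounding $\Sz(Z)$ and then invoke the reflexive version from \cite{OSZ3}. Since the Tsirelson basis is weakly null, any FDD $(E_i)$ with subsequential $T_{\alpha,c}$ upper block estimates is automatically shrinking (as noted in Section~\ref{S2}), so $Z^*$ is separable. As the Szlenk index is monotone under isomorphic embeddings, it suffices to show $\Sz(Z)\le\omega^{\alpha\omega}$. By Proposition~\ref{P0} the boundedly complete FDD $(E_i^*)$ of $Z^*$ satisfies subsequential $T_{\alpha,c}^*$ lower block estimates, and the $w^*$-slicing computation of \cite{OSZ3} then produces the desired bound: that argument operates purely on a boundedly complete FDD together with its lower block estimate, and never uses reflexivity of $Z$. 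The main obstacle is precisely this last step---confirming that the slicing argument of \cite{OSZ3} transfers without modification from the reflexive to the separable-dual setting. Should any part of it implicitly invoke reflexivity, the fallback is to use the universal space $Y$ of Corollary~\ref{C1}: embed $Z$ (hence $X$) into $Y$ and establish $\Sz(Y)\le\omega^{\alpha\omega}$ once and for all by analysing the boundedly complete FDD of $Y^*=(W^{(*)})^{V^*}$ with $V=(t_i)$.
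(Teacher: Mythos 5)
Your proof follows essentially the same route as the paper: (i)$\Rightarrow$(ii) by citing \cite{OSZ3}, (ii)$\Rightarrow$(iii) by the implication $1)\Rightarrow 5)$ of Theorem~\ref{T1}, and (iii)$\Rightarrow$(i) via the Szlenk-index bound from \cite{OSZ3}. For the last implication the paper simply cites \cite[Proposition 17]{OSZ3}, which already applies to any space with a shrinking FDD satisfying subsequential $T_{\alpha,c}$ upper block estimates without any reflexivity assumption, so your extra caution and the fallback via Corollary~\ref{C1} are unnecessary.
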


\begin{proof}
  The implication (i)$\Rightarrow$(ii) is proved in Corollary 19 and
  Theorem 21 of~\cite{OSZ3} (the reflexivity assumption there is not
  used for the relevant implication). The implication
  (iii)$\Rightarrow$(i) follows from~\cite[Propositon
  17]{OSZ3}. Finally, (ii)$\Rightarrow$(iii) follows from the
  implication (1)$\Rightarrow$(5) of Theorem~\ref{T1}.
\end{proof}

\begin{cor}\label{C3}
For each $\alpha<\omega_1$ there exists a Banach space $Z_\alpha$
with a shrinking FDD and Szlenk index at most
$\omega^{\alpha\omega+1}$ such that $Z_\alpha$ is universal for the
collection of spaces with separable dual and Szlenk index at most
$\omega^{\alpha\omega}$.
\end{cor}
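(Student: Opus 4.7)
The plan is to construct $Z_\alpha$ as an $\ell_2$-sum of the universal spaces supplied by Corollary~\ref{C1}. For each $n\in\N$, the unit vector basis of the Tsirelson space $T_{\alpha,n/(n+1)}$ is normalized, 1-unconditional, shrinking, block-stable and right-dominant (these are exactly the properties of $T_{\alpha,c}$ recalled in Section~\ref{S2} from~\cite{OSZ3}). Applying Corollary~\ref{C1} with $V$ equal to this basis yields a Banach space $X_n$ with a shrinking FDD satisfying subsequential $T_{\alpha,n/(n+1)}$ upper block estimates, which is universal for the class of spaces with separable dual having subsequential $T_{\alpha,n/(n+1)}$ upper tree estimates. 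I then define
\[
Z_\alpha \keq \Big(\sum_{n\in\N} X_n\Big)_{\ell_2}\,,
\]
and observe that the concatenation of the shrinking FDDs of the $X_n$ yields a shrinking FDD of $Z_\alpha$ (since $\ell_2$ is reflexive with shrinking basis, the $\ell_2$-sum of spaces with shrinking FDDs has a shrinking FDD).

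To verify universality, fix a Banach space $X$ with separable dual and $\Sz(X)\kleq\omega^{\alpha\omega}$. Corollary~\ref{C2} gives some $c\kin(0,1)$ for which $X$ has subsequential $T_{\alpha,c}$ upper tree estimates. Pick $n\kin\N$ with $c\kleq n/(n\kplus 1)$; since increasing the parameter of a Tsirelson space enlarges the norm of every finitely supported vector, the unit vector basis of $T_{\alpha,c}$ is dominated by that of $T_{\alpha,n/(n+1)}$, and hence $X$ automatically has subsequential $T_{\alpha,n/(n+1)}$ upper tree estimates. By the choice of $X_n$ in Corollary~\ref{C1}, $X$ embeds into $X_n$, and composing with the canonical isometric inclusion $X_n\khookrightarrow Z_\alpha$ produces the desired embedding of $X$ into $Z_\alpha$.

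It remains to bound the Szlenk index. By the implication (iii)$\Rightarrow$(i) of Corollary~\ref{C2}, each $X_n$ satisfies $\Sz(X_n)\kleq\omega^{\alpha\omega}$. Since the Szlenk index of a separable dual space is always of the form $\omega^{\gamma}$, the general estimate for $\ell_p$-sums (proved in~\cite{OSZ3}; cf.~also the proof of Theorem~\ref{T2}) that
\[
\Sz\Big( \big(\textstyle\sum_n Y_n\big)_{\ell_p} \Big) \kleq \omega\kcdot \sup_n \Sz(Y_n)
\]
for $1\kle p\kle\infty$ gives $\Sz(Z_\alpha)\kleq\omega\kcdot\omega^{\alpha\omega}\keq\omega^{\alpha\omega+1}$, completing the proof.

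The main step that needs care is the Szlenk index arithmetic for the $\ell_2$-sum: everything else is an essentially formal combination of the two corollaries together with the monotonicity of $T_{\alpha,c}$ in the parameter $c$. The reason the construction is done with the specific sequence of parameters $n/(n\kplus 1)$ (rather than using a single $T_{\alpha,c}$) is precisely that Corollary~\ref{C2} only guarantees some $c\kin(0,1)$ depending on $X$, and no single $c\kle 1$ works for the whole class; the countable $\ell_2$-sum absorbs this dependence at the expense of a single additional $\omega$ in the Szlenk index.
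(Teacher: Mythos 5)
Your proof follows exactly the paper's approach: form $Z_\alpha=(\bigoplus X_n)_{\ell_2}$ with $X_n$ from Corollary~\ref{C1} applied to $T_{\alpha,n/(n+1)}$, use Corollary~\ref{C2} for universality, and cite the $\ell_p$-sum Szlenk estimate from~\cite{OSZ3}. The one thing to fix is the ordinal arithmetic in the last step: you wrote the estimate as $\Sz\big((\sum_n Y_n)_{\ell_p}\big)\leq\omega\cdot\sup_n\Sz(Y_n)$ and then claimed $\omega\cdot\omega^{\alpha\omega}=\omega^{\alpha\omega+1}$, but for $\alpha\geq 1$ we have $\omega\cdot\omega^{\alpha\omega}=\omega^{1+\alpha\omega}=\omega^{\alpha\omega}$, which is too small. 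The correct form of the estimate (and the one actually proved in~\cite{OSZ3}) multiplies by $\omega$ on the right, $\Sz\big((\sum_n Y_n)_{\ell_p}\big)\leq\big(\sup_n\Sz(Y_n)\big)\cdot\omega$, giving $\omega^{\alpha\omega}\cdot\omega=\omega^{\alpha\omega+1}$ as desired; with that correction the argument is complete and matches the paper.
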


\begin{proof}
By Corollary \ref{C1} for all $n\in\N$ there exists a Banach space
$X_n$ with an FDD satisfying subsequential
$T_{\alpha,\frac{n}{n+1}}$ upper block estimates which is universal for
all Banach spaces with separable dual which satisfy subsequential
$T_{\alpha,\frac{n}{n+1}}$ upper tree estimates. Let
$Z_\alpha=(\bigoplus X_n)_{\ell_2}$.  We have that $Z_\alpha$ is
universal for the collection of spaces with separable dual and
Szlenk index at most $\omega^{\alpha\omega}$ by Corollary \ref{C2}.
The Szlenk index of $Z_\alpha$ is at most $\omega^{\alpha\omega+1}$
as proven in \cite{OSZ3}.
\end{proof}

\end{document}